\newtheorem{thm}{Theorem}[section]
\newtheorem{theorem}[thm]{Theorem}
\newtheorem{corollary}[thm]{Corollary}
\newtheorem{lemma}[thm]{Lemma}
\newtheorem{proposition}[thm]{Proposition}
\newtheorem{example}[thm]{Example}
\theoremstyle{definition}
\theoremstyle{remark}
\newtheorem{remark}[thm]{Remark}
\newenvironment{theorem*}[1]{\smallskip\noindent{\bf #1.}\it}{\medskip}
\numberwithin{equation}{section} \setcounter{section}{0}
\newcommand\dom{\operatorname{dom}}
\newcommand\bC{{\mathbb C}}
\newcommand\bR{{\mathbb R}}
\newcommand\bZ{{\mathbb Z}}
\newcommand\cL{{\mathscr L}}
\newcommand\cA{{\mathcal A}}
\newcommand\cE{{\mathcal E}}
\newcommand\al{\alpha}
\newcommand\la{\lambda}
\DeclareMathOperator*{\myres}{\mathrm{res}}
\begin{document}

\title{Spectral properties of {S}turm--{L}iouville equations with singular energy-dependent potentials}

\author[N.~Pronska]{Nataliya~Pronska}%

\address[N.P.]{Institute for Applied Problems of Mechanics and Mathematics,
3b~Naukova st., 79601 Lviv, Ukraine}
\email{nataliya.pronska@gmail.com}

\subjclass[2010]{Primary 34L05, Secondary 34B07, 34B24, 34B30, 47E05}%

\keywords{Spectral problem, spectral properties, energy-dependent potentials, Sturm--Liouville operators}%

\date{\today}

\begin{abstract}
We study spectral properties of energy-dependent Sturm--Liouville
equations, introduce the notion of norming constants and establish
their interrelation with the spectra. One of the main tools is the
linearization of the problem in a suitable Pontryagin
space.
\end{abstract}

\maketitle


\section{Introduction}


The main aim of the present paper is to investigate spectral
properties of Sturm--Liouville problems with energy-dependent
potentials given by the differential equations
\begin{equation}\label{eq:intr.spr}
    -y''+qy+2\lambda p y=\lambda^2y
\end{equation}
on~(0,1) and some boundary conditions. Here~$p$ is a real-valued function
from~$L_{2}(0,1)$, $q$ is a real-valued distribution from the
Sobolev space~$W_2^{-1}(0,1)$, and~$\la\in\bC$ is a spectral
parameter. (A detailed definition will be given in the next
section).

The spectral equation~\eqref{eq:intr.spr} is of importance in
classical and quantum mechanics. For example, such problems arise
in solving the Klein--Gordon equations, which describe the motion
of massless particles such as photons (see \cite{Jon:93,Naj:83}).
Sturm--Liouville energy-dependent equations are also used for
modelling vibrations of mechanical systems in viscous media
(see~\cite{Yam:90}). Note that in such models the spectral
parameter~$\la$ is related to the energy of the system, and this
motivates the terminology ``energy-dependent'' used for the
spectral problem of the form~\eqref{eq:intr.spr}.

The equations under study were also considered on the line and
discussed in the context of the inverse scattering theory (see,
e.~g. \cite{JauJea761, MeePiv01,SatSzm95, AktMee91, Kam081,
MakGus86, Tsu81},  and~\cite{HryPro:2012} for a more extensive
reference list). Some of their spectral properties in this context
were established in~\cite{MeePiv02}. The spectral
problem~\eqref{eq:intr.spr} on an interval with~$p\in
W_2^1[0,\pi]$ and~$q\in L_2[0,\pi]$ and with general boundary
conditions was also studied by Gasymov and Nabiev
in~\cite{GusNab00,Nab00}. An interesting approach to the spectral
analysis of problems under consideration uses the theory of Krein
spaces (i.e. spaces with indefinite scalar products). It was
suggested by P.~Jonas~\cite{Jon:93} and H.~Langer, B.~Najman, and
C.~Tretter~\cite{LanNajTre:06,Naj:83,LanNajTre:08}.

 In the present paper, we consider~\eqref{eq:intr.spr} under
minimal smoothness assumption on the real-valued potentials~$p$
and~$q$. As equation~\eqref{eq:intr.spr} contains terms depending
on the spectral parameter~$\la$ and its square~$\la^2$ as well,
the spectral problem of interest is better understood as that for
the corresponding quadratic operator pencil. And indeed, some of
the spectral properties of the Sturm--Liouville energy-dependent
equations~\eqref{eq:intr.spr} are derived in this paper from the
general spectral theory of polynomial operator pencils
(see~\cite{Mar:88}) and some by the direct analysis of the
corresponding quadratic operator pencil. We also prove equivalence
of the spectral problem for~\eqref{eq:intr.spr} and that for its
linearization $\mathscr{L}$. The operator $\mathscr{L}$ turns out
to be self-adjoint in a suitably defined Pontryagin space, which
provides some further properties of the operator
pencil~\eqref{eq:intr.spr}.

We also introduce the notion of the norming constants for the
problem~\eqref{eq:intr.spr}. For real and simple eigenvalues the
definition of these quantities is analogous to that for the
standard Sturm--Liouville operators. However, since the
problem~\eqref{eq:intr.spr} can also have non-real and/or
non-simple eigenvalues our definition is more general. These
quantities are shown to be related to the spectra
of~\eqref{eq:intr.spr} similarly to the classical Sturm--Liouville
theory. We obtain an explicit formula determining norming
constants via two spectra. We also derive sufficient conditions
for simplicity of the spectra. The obtained results have their
important applications in the inverse problems of reconstruction
of the potentials~$p$ and~$q$ (or its primitive~$r$) from two
spectra or one spectrum and a set of norming constants
(see~\cite{HryPro:2012,Pro:2011c}).

The paper is organized as follows. In the next section we
formulate the spectral problem under study as that for the
corresponding operator pencil~$T$ and recall some notions from the
operator pencil theory. In Section~\ref{sec:SPOP}, we analyse the
operator pencil~$T$ and obtain some of its spectral properties. We
construct a linearization~$\cL$ of the spectral problem for~$T$ in
Section~\ref{sec:Lin}. The operator~$\cL$ is considered in a
specially defined Pontryagin space (i.~e. in the space with
indefinite inner product) and is shown to be self-adjoint therein.
This gives more spectral properties of~$\cL$ and so of~$T$. In
Section~\ref{sec:nc}, we introduce the notion of norming constants
for the problem under study and derive some relations for these
quantities. Section~\ref{sec:case} is devoted to the case when the
spectra of the problems~\eqref{eq:intr.spr} under two types of
boundary conditions are real and simple. We obtain sufficient
conditions for this.

\emph{Notations.} Throughout the paper, $\rho(T)$, $\sigma(T)$
and~$\sigma_\mathrm{p}(T)$ denote the resolvent set, the spectrum
and the point spectrum of a linear operator or a quadratic
operator pencil~$T$. The superscript~$\mathrm{t}$ will signify
 the transposition of vectors and matrices, e.~g.\
$(c_1,c_2)^{\mathrm{t}}$ is the column vector~$\binom{c_1}{c_2}$.

\section{Preliminaries}\label{sec:pre}
Consider equation~\eqref{eq:intr.spr} subject to the Dirichlet
boundary conditions
\begin{equation}\label{eq:pre.b.c.}
    y(0)=y(1)=0.
\end{equation}
Notice that other separate boundary conditions can be treated
similarly; in particular, in Sections~\ref{sec:nc}
and~\ref{sec:case} we shall consider~\eqref{eq:intr.spr} under the
mixed conditions~\eqref{eq:nc.mbc}. We restrict our attention
to~\eqref{eq:pre.b.c.} merely in order to enlighten the ideas and
avoid unessential technicalities.

The spectral equation~\eqref{eq:intr.spr} depends on the
parameter~$\la$ non-linearly. Thus to formulate the spectral
problem of interest rigorously we should
regard~\eqref{eq:intr.spr} as a spectral problem for some operator
pencil. To start with, consider the differential expression
\[
    \ell(y) := - y'' + q y.
\]
As~$q$ is a real-valued distribution from~$W_2^{-1}(0,1)$ we need
to explain how~$\ell(y)$ is defined. The simplest and most
convenient way uses the method of regularization by
quasi-derivatives (see, e.g.~\cite{SavShk:1999,SavShk:2003}) that
proceeds as follows. Take a real-valued~$r\in L_2(0,1)$ such
that~$q=r'$ in the distributional sense and for every absolutely
continuous function~$y$ denote by~$y^{[1]}:=y'-ry$ its
quasi-derivative. We then define~$\ell$ as
\[
   \ell (y) = -\bigl(y^{[1]}\bigr)' - r y^{[1]} - r^2 y
\]
on the domain
\[
    \dom \ell = \{y \in AC (0,1) \mid y^{[1]} \in AC[0,1], \ \ell(y) \in L_2(0,1)\}.
\]
Direct verification shows that with this
definition~$\ell(y)=-y''+qy$ in the distributional sense. Observe
also that for every~$f$ from~$L_2(0,1)$, every complex~$a,b$ and
every~$x_0$ from~$[0,1]$ the equation~$\ell(y)=\mu y +f$ possesses
a unique solution satisfying the initial conditions~$y(x_0)=a$
and~$y^{[1]}(x_0)=b$.

Denote by~$A$ the operator acting via
\[
    Ay:=\ell(y)
\]
on the domain
\[
    \dom A:=\{y \in \dom \ell \mid y(0)=y(1)=0\}.
\]

For regular~$q$, the operator~$A$ is a standard Sturm--Liouville
operator with potential~$q$ and the Dirichlet boundary conditions.
It was shown in~\cite{SavShk:1999,SavShk:2003} that if~$q\in
W_2^{-1}(0,1)$ is real-valued, then the operator~$A$ is
self-adjoint, bounded below and has a simple discrete spectrum.

\begin{remark}\label{rem:pre.DisSp}
Recall that an operator~$S$ is said to possess discrete spectrum
if~$\sigma(S)$ consists of isolated points, each of which is an
eigenvalue of finite algebraic multiplicity. By Theorem~III.6.29
of~\cite{Kat:1966}, $S$ has discrete spectrum if its resolvent is
compact for one (and then for all)~$\lambda\in\rho(S)$.
\end{remark}

Next we denote by~$B$ the operator of multiplication by the
function ~$2p\in L_2(0,1)$, by~$I$ the identity operator and
define the quadratic operator pencil~$T$ as
\begin{equation}\label{eq:pre.T}
    T(\la):=\la^2I-\la B-A, \qquad \lambda \in \mathbb{C}.
\end{equation}
Then the spectral problem~\eqref{eq:intr.spr}, \eqref{eq:pre.b.c.}
can be regarded as the spectral problem for the operator
pencil~$T$. Properties of the operators~$A$ and~$B$ guarantee that
the pencil~$T$ is well defined on the~$\la$-independent
domain~$\dom T:=\dom A$. More exactly, the following statement
holds true.

\begin{proposition}\label{pr:pre.T_cl}
For every fixed~$\la_0 \in \mathbb{C}$ the operator~$T(\la_0)$ is
closed on the domain~$\dom T:=\dom A$ and has a discrete spectrum.
\end{proposition}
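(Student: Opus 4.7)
The plan is to view $T(\la_0)=-A+\la_0^2 I-\la_0 B$ as a relatively compact perturbation of the shifted self-adjoint operator $-A+\la_0^2 I$. The cited results of Savchuk--Shkalikov ensure that $A$ is self-adjoint with compact resolvent, so both conclusions of the proposition will follow once $B$ is shown to be $A$-compact, i.e.\ $B(A-zI)^{-1}$ is compact on $L_2(0,1)$ for some (and hence every) $z\in\rho(A)$.

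\textbf{Step 1 (the main step).} To establish $A$-compactness of $B$, I would approximate $p$ in $L_2(0,1)$ by bounded functions $p_n$ (for instance, the truncations $p\,\chi_{\{|p|\le n\}}$) and let $B_n$ denote multiplication by $2p_n$. Each $B_n$ is bounded on $L_2(0,1)$, and hence $B_n(A-zI)^{-1}$ is compact as the composition of a bounded operator with the compact resolvent of $A$. The inequality
\[
\|(B_n-B)(A-zI)^{-1}f\|_{L_2}\le 2\|p_n-p\|_{L_2}\,\|(A-zI)^{-1}f\|_\infty
\]
reduces norm convergence $B_n(A-zI)^{-1}\to B(A-zI)^{-1}$ to the a priori bound
\[
\|(A-zI)^{-1}f\|_\infty\le C_z\,\|f\|_{L_2}.
\]
The latter follows from a Green's-function representation of the resolvent: two linearly independent solutions of $\ell(y)=zy$, constructed in the quasi-derivative framework, produce a jointly continuous (hence bounded) kernel $G(\cdot,\cdot;z)$ on $[0,1]^2$, and Cauchy--Schwarz applied to $((A-zI)^{-1}f)(x)=\int_0^1 G(x,t;z)f(t)\,dt$ gives the required estimate. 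Thus $B(A-zI)^{-1}$ is an operator-norm limit of compact operators, hence itself compact.

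\textbf{Step 2 (closedness).} $A$-compactness of $B$ implies $A$-boundedness with relative bound zero, and the Kato--Rellich perturbation theorem then gives that $-A-\la_0 B$ is closed on $\dom A$; adding the bounded scalar $\la_0^2 I$ preserves closedness, so $T(\la_0)$ is closed on $\dom T=\dom A$.

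\textbf{Step 3 (discrete spectrum).} For $c$ with $|c|$ sufficiently large one has $\|\la_0 B(A+cI)^{-1}\|_{L_2\to L_2}<1$ (a consequence of the relative-bound-zero property), whence a Neumann series shows that $-(A+cI)-\la_0 B=T(\la_0)-(\la_0^2+c)I$ is boundedly invertible with inverse
\[
(T(\la_0)-(\la_0^2+c)I)^{-1}=-(A+cI)^{-1}\sum_{n\ge 0}\bigl[-\la_0 B(A+cI)^{-1}\bigr]^n,
\]
a norm-convergent sum each of whose terms is compact (containing the compact factor $(A+cI)^{-1}$). Thus $\rho(T(\la_0))$ contains a point at which the resolvent is compact, and Remark~\ref{rem:pre.DisSp} yields discreteness of the spectrum.

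\textbf{Main obstacle.} Step~1 is the technical heart. Since $p\in L_2(0,1)$ is not bounded, the multiplication operator $B$ is itself unbounded on $L_2(0,1)$, and one must extract from $(A-zI)^{-1}$ the smoothing $L_2(0,1)\to C[0,1]$ in order to compensate. Verifying this smoothing rigorously requires working throughout within the quasi-derivative regularisation of $\ell$ (the distributional character of $q\in W_2^{-1}$ rules out any classical route), which is precisely the framework in which $A$ is defined.
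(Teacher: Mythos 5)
Your proof is correct and rests on the same key fact as the paper's: the resolvent of $A$ has a continuous Green's function on $[0,1]^2$, which compensates for the unboundedness of the multiplication operator $B$ and yields $A$-compactness of the perturbation; closedness then follows from the standard stability theorem (the paper cites \cite[Theorem~IV.1.11]{Kat:1966}; your reference to ``Kato--Rellich'' is a slight misnomer, since $\la_0 B$ is not symmetric for complex $\la_0$, but the substance is right). The differences are in execution. For compactness, the paper observes directly that $(\la_0^2I-\la_0B)(A-\mu I)^{-1}$ is an integral operator with kernel $(\la_0^2-2\la_0 p(x))k_0(x,s)$, which is square integrable because $p\in L_2$ and $k_0$ is continuous, hence Hilbert--Schmidt; your truncation argument $p_n=p\,\chi_{\{|p|\le n\}}$ reaches the same conclusion as a norm limit of compacts, at the cost of an extra approximation step but without invoking the Hilbert--Schmidt class. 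For discreteness, the paper invokes the invariance of the essential spectrum under relatively compact perturbations (\cite[Theorem~IV.5.35]{Kat:1966}) to get $\sigma_{\mathrm{ess}}(T(\la_0))=\sigma_{\mathrm{ess}}(A)=\emptyset$, whereas you exhibit an explicit point of $\rho(T(\la_0))$ via a Neumann series and check that the resolvent there is compact, then apply Remark~\ref{rem:pre.DisSp}. Your route is more self-contained and elementary (it needs only the relative bound zero of an $A$-compact perturbation of a semibounded self-adjoint operator to make $\|\la_0 B(A+cI)^{-1}\|<1$ for large $c$), while the paper's is shorter given the cited machinery; both are sound.
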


\begin{proof}
 Since the domain of the operator~$A$
consists only of bounded functions we have that~$\dom B\supset
\dom A$. This immediately gives that for every~$\la_0\in\bC$ the
operator~$T(\la_0)$ is well defined.

Let us fix~$\la_0\in\bC$.  Take an arbitrary~$\mu \in \rho(A)$ and
denote by~$\varphi_-$ and~$\varphi_+$ solutions of the
equation~$\ell (y)=\mu y$ satisfying boundary
conditions~$\varphi_-(0)=0$, $\varphi_-^{[1]}(0)=1$ and
~$\varphi_+(1)=0$, $\varphi_+^{[1]}(1)=1$. Then the Green function
of~$A-\mu$,~i.~e. the kernel of the operator~$(A-\mu I)^{-1}$ is
equal to
\[
k_0(x,s):=\left\{\begin{array}{c}
       \varphi_+(x)\varphi_-(y)/W, \text{ when } x>s \\
                     \varphi_-(x)\varphi_+(y)/W, \text{ when } x\le s\\
                  \end{array},\right.
\]
where~$W=\varphi_-(x)\varphi_+^{[1]}(x)-\varphi_+(x)\varphi_-^{[1]}(x)$
is the Wronskian of solutions~$\varphi_-$ and~$\varphi_+$. In
particular, the Green function is continuous on the
square~$\Omega:=[0,1]\times[0,1]$. It follows that the
operator~$(\la_0^2I-\la_0 B)(A-\mu I)^{-1}$ is an integral one
with the kernel~$k$ given by
\[
    k(x,s)=(\la_0^2-2\la_0 p(x))k_0(x,s).
\]
As~$k$ is square integrable on~$\Omega$, the
operator~$(\la_0^2I-\la_0 B)(A-\mu I)^{-1}$ is of the
Hilbert--Schmidt class and thus~$\la_0^2I-\la_0 B$ is~$A$--compact
(see~\cite[Ch.~IV]{Kat:1966}). In view of Theorem~IV.1.11
of~\cite{Kat:1966} the operator~$T(\la_0)$ is closed on~$\dom A$.
Moreover, Theorem~IV.5.35 of~\cite{Kat:1966} implies the
coincidence of the essential spectra of the operators $A$ and
$T(\lambda_0)$. As~$A$ has discrete spectrum, we get
that~$\sigma_{\mathrm{ess}}T(\lambda_0)=\sigma_{\mathrm{ess}}(A)=\emptyset$
and thus the spectrum of~$T(\lambda_0)$ is discrete.
\end{proof}

Let us now recall some notions of the spectral theory of operator
pencils, see~\cite{Mar:88}.

An \emph{operator pencil}~$T$ is an operator-valued function
on~$\bC$.
 The \emph{spectrum} of an
operator pencil~$T$ is the set~$\sigma(T)$  of all $\lambda\in\bC$
such that~$T(\lambda)$ is not boundedly invertible, i.e.
\[
    \sigma(T)=\{\lambda\in\mathbb{C}\mid 0\in\sigma(T(\lambda))\}.
\]
A number $\lambda\in\bC$ is called an \emph{eigenvalue} of $T$ if
$T(\lambda)y=0$ for some non-zero function~$y\in\dom T$, which is
then the corresponding \emph{eigenfunction}. The eigenvalues
of~$T$ constitute its \emph{point
spectrum}~$\sigma_\mathrm{p}(T)$, i.e.,
\[
    \sigma_\mathrm{p}(T)=\{\lambda\in \mathbb{C}\mid0\in\sigma_\mathrm{p}(T(\lambda))\}.
\]
 The set
\[
    \rho(T):=\mathbb{C}\setminus\sigma(T)
\]
is the \emph{resolvent set} of an operator pencil~$T$.

Vectors~$y_1,...y_{m-1}$ are said to be associated with an eigenvector~$y_0$ corresponding to an eigenvalue~$\la$ if
\[
    \sum\limits_{k=0}^j\frac{1}{k!}T^{(k)}(\la)y_{j-k}=0,\quad {j=1,...,m-1}.
\]
Here~$T^{(k)}$ denotes the \emph{k}-th derivative of~$T$ with
respect to~$\la$. The number~$m$ is called the length of the
chain~$y_0,\dots,y_{m-1}$ of an eigen- and associated vectors. The
maximal length of a chain starting with an eigenvector~$y_0$ is
called the \emph{algebraic multiplicity} of an eigenvector~$y_0$.

For an eigenvalue~$\la$ of~$T$ the dimension of the null-space
of~$T(\la)$ is called the \emph{geometric multiplicity} of~$\la$.
The eigenvalue is said to be \emph{geometrically simple} if its
geometric multiplicity equals to one.

For the pencil~$T$ of~\eqref{eq:pre.T} the operator~$-T(\la_0)$ is
a Sturm--Liouville operator with potential~$q+2\la_0p-\la_0^2$ and
the Dirichlet boundary conditions, whence the dimension of its
null-space is at most one. Therefore all the eigenvalues of the
pencil~$T$ under study are geometrically simple, and then the
\emph{algebraic multiplicity} of an eigenvalue is the algebraic
multiplicity of the corresponding eigenvector. (If the
eigenvalue~$\la$ is not geometrically simple, its algebraic
multiplicity is the number of vectors in the corresponding
canonical system, see~\cite{Mar:88,Kel:1971}). An eigenvalue is
said to be \emph{algebraically simple} if its algebraic
multiplicity is one.


\section{Spectral properties of the operator
pencil}\label{sec:SPOP}


In this section we discuss some basic spectral properties of the
operator pencil~$T$. We start with the following lemmas.
\begin{lemma}\label{lem:spop.sp_point}
The spectrum of the operator pencil~$T$ consists only of
eigenvalues.
\end{lemma}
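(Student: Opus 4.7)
The plan is to reduce the statement about the pencil $T$ to the already-established fact from Proposition~\ref{pr:pre.T_cl} that each operator $T(\la_0)$ has discrete spectrum. By the very definition of the spectrum of a pencil, $\la_0\in\sigma(T)$ iff $0\in\sigma(T(\la_0))$, so it suffices to show that whenever $0\in\sigma(T(\la_0))$, then in fact $0$ is an eigenvalue of $T(\la_0)$, i.e.\ there is a nonzero $y\in\dom T$ with $T(\la_0)y=0$.

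First I would fix $\la_0\in\sigma(T)$. By Proposition~\ref{pr:pre.T_cl}, $T(\la_0)$ is a closed operator on $\dom T=\dom A$ with discrete spectrum. Recalling (Remark~\ref{rem:pre.DisSp}) that ``discrete spectrum'' means the spectrum consists exclusively of isolated eigenvalues of finite algebraic multiplicity, the assumption $0\in\sigma(T(\la_0))$ forces $0$ to be an eigenvalue of the operator $T(\la_0)$. Hence there exists a nonzero $y\in\dom A$ with $T(\la_0)y=0$, and this means exactly that $\la_0\in\sigma_\mathrm{p}(T)$ with eigenfunction $y$. Since the reverse inclusion $\sigma_\mathrm{p}(T)\subseteq\sigma(T)$ is automatic, we obtain $\sigma(T)=\sigma_\mathrm{p}(T)$.

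There is essentially no obstacle here beyond invoking Proposition~\ref{pr:pre.T_cl} and unwinding the definitions; the real work has been done in establishing that the perturbation $\la_0^2 I-\la_0 B$ is $A$-compact, which rules out continuous and residual spectrum at every point $\la_0$ and collapses $\sigma(T(\la_0))$ to a set of eigenvalues. Consequently the proof should be just a few lines.
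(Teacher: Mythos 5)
Your proof is correct and follows essentially the same route as the paper: both reduce the claim to the fact, established in Proposition~\ref{pr:pre.T_cl}, that each $T(\la_0)$ has discrete spectrum, so $0\in\sigma(T(\la_0))$ forces $0$ to be an eigenvalue of $T(\la_0)$ and hence $\la_0\in\sigma_\mathrm{p}(T)$.
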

\begin{proof}
By definition,~$\lambda_0\in\bC$ belongs to the spectrum of the
operator pencil~$T$ if and only if~$0\in\sigma(T(\lambda_0))$.
Since~$\sigma(T(\la_0))=\sigma_{\mathrm{p}}(T(\la_0))$ (see
Proposition~\ref{pr:pre.T_cl}), every~$\la_0$ in the spectrum
of~$T$ is its eigenvalue.
\end{proof}

\begin{lemma}\label{lem:spop.resol_set}
The resolvent set of the operator pencil~$T$ is not empty.
\end{lemma}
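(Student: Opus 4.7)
The plan is to exhibit a concrete $\la_0\in\bC$ for which $T(\la_0)$ is boundedly invertible on $L_2(0,1)$. By Proposition~\ref{pr:pre.T_cl} every $T(\la_0)$ has discrete spectrum, so (Remark~\ref{rem:pre.DisSp}) $\sigma(T(\la_0))=\sigma_\mathrm{p}(T(\la_0))$; equivalently, $\la_0\in\rho(T)$ if and only if $0$ is not an eigenvalue of $T(\la_0)$, i.e. if and only if $\ker T(\la_0)=\{0\}$. The task thus reduces to finding a single $\la_0$ for which $T(\la_0)$ is injective.

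I would try $\la_0=i\tau$ with $\tau\in\bR$ large and positive. For $y\in\dom A$ a direct computation gives
\[
    \langle T(i\tau)y,y\rangle = -\tau^2\|y\|^2 - i\tau\langle By,y\rangle - \langle Ay,y\rangle.
\]
Since $p$ is real-valued, $\langle By,y\rangle=2\int_0^1 p|y|^2\,dx$ is real and the middle term is purely imaginary. Taking real parts and using that $A$ is self-adjoint and bounded below, say $A\ge -CI$ for some $C\ge 0$, I obtain
\[
    -\mathrm{Re}\,\langle T(i\tau)y,y\rangle = \tau^2\|y\|^2 + \langle Ay,y\rangle \ge (\tau^2-C)\|y\|^2.
\]
Cauchy--Schwarz then yields $\|T(i\tau)y\|\,\|y\|\ge|\langle T(i\tau)y,y\rangle|\ge(\tau^2-C)\|y\|^2$ for every $y\in\dom A$, so $T(i\tau)$ is bounded below, and in particular injective, as soon as $\tau^2>C$. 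Combined with the reduction above, this gives $i\tau\in\rho(T)$ for all such $\tau$.

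The anticipated obstacle is the unboundedness of the multiplication operator $B$, which is only $A$-compact; this would spoil a naive coercivity estimate along a generic ray in $\bC$. The trick is that the choice $\la_0=i\tau$ on the imaginary axis forces the entire $B$-contribution into the imaginary part of $\langle T(i\tau)y,y\rangle$, where it automatically drops out of the lower bound on the modulus. Once that observation is in place the remainder of the argument is routine, relying only on the already-quoted fact that the self-adjoint operator $A$ is bounded below.
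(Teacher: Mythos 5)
Your proof is correct and follows essentially the same route as the paper: both reduce the claim to injectivity via Lemma~\ref{lem:spop.sp_point}, evaluate the quadratic form of $T$ at a point $i\tau$ on the imaginary axis so that the $B$-contribution becomes purely imaginary, and then invoke lower semiboundedness of $A$ to rule out a kernel. The only difference is cosmetic: you phrase it as a coercivity bound $\|T(i\tau)y\|\ge(\tau^2-C)\|y\|$, whereas the paper argues by contradiction from $((\mu^2+A)y,y)+i\mu(By,y)=0$.
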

\begin{proof}
As the operator~$A$ is lower semibounded, a number~$\mu$ exists
such that the operator~$A+\mu^2 I$ is positive. Let us show that
then the number~$i\mu$ belongs to the resolvent set~$\rho(T)$ of
the operator pencil~$T$. Suppose it does not; then, by the
previous lemma, there exists a nonzero eigenfunction~$y$ such
that~$T(i\mu)y=0$ and so
\[
    ((\mu^2+A)y,y)+i\mu (B y,y)=0.
\]
This contradicts  positivity of~$A+\mu^2I$. Therefore~$i\mu$
belongs to~$\rho(T)$ and the lemma is proved.
\end{proof}

Using lemmas we shall prove discreteness of the spectrum of the
operator pencil~$T$.

\begin{lemma}\label{lem:psp.T_dis_sp}
The spectrum of the operator pencil~$T$ is a discrete subset
of~$\bC$.
\end{lemma}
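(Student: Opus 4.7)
My plan is to deduce discreteness from the analytic Fredholm theorem applied to a reformulation of the pencil on $L_2(0,1)$. Fix any $\mu\in\rho(A)$, which exists since $A$ is self-adjoint with discrete spectrum. For every $\la\in\bC$, rewriting $-A = -(A-\mu I) - \mu I$ and post-multiplying by $(A-\mu I)^{-1}$ yields the factorization
\[
    T(\la) = -\bigl(I - G(\la)\bigr)(A-\mu I), \qquad G(\la) := (\la^2-\mu)(A-\mu I)^{-1} - \la B(A-\mu I)^{-1}.
\]
Since $A-\mu I$ maps $\dom A$ bijectively onto $L_2(0,1)$, bounded invertibility of $T(\la)$ on $\dom A$ is equivalent to that of $I-G(\la)$ on $L_2(0,1)$; in particular
\[
    \sg(T) = \{\la\in\bC : 1\in\sg(G(\la))\}.
\]

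Next I would verify that $\la\mapsto G(\la)$ is an analytic family of compact operators on $L_2(0,1)$. Polynomial (hence analytic) dependence on $\la$ is immediate from the definition. Compactness of $(A-\mu I)^{-1}$ follows from Remark~\ref{rem:pre.DisSp} applied to $A$, while $B(A-\mu I)^{-1}$ is Hilbert--Schmidt by precisely the integral-kernel computation already carried out in the proof of Proposition~\ref{pr:pre.T_cl}. Hence $G(\la)$ is compact for every $\la\in\bC$.

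By Lemma~\ref{lem:spop.resol_set} there exists $\la_0\in\bC$ at which $I-G(\la_0)$ is boundedly invertible. The analytic Fredholm theorem then guarantees that the set of $\la\in\bC$ for which $I-G(\la)$ fails to be boundedly invertible is discrete in $\bC$, that at each such $\la$ the operator $I-G(\la)$ is Fredholm of index zero with finite-dimensional kernel, and that $(I-G(\la))^{-1}$ is a meromorphic operator-valued function with finite-rank residues at its poles. In view of the equivalence above this immediately yields that $\sg(T)$ is a discrete subset of $\bC$ in the sense of Remark~\ref{rem:pre.DisSp}. The principal technical subtlety I anticipate is not the isolated-points statement but rather matching the pencil-theoretic algebraic multiplicity of an eigenvalue $\la\in\sg(T)$, defined via the chain condition $\sum_{k=0}^j \frac{1}{k!}T^{(k)}(\la)y_{j-k}=0$, with the order of the pole of $(I-G(\,\cdot\,))^{-1}$ at $\la$; this is resolved by translating each chain $y_0,\ldots,y_{m-1}$ for $T$ at $\la$ into a corresponding chain for the Fredholm-analytic family $I-G$ via the substitution $v_k=(A-\mu I)y_k$, a standard correspondence between the pencil and Fredholm-function descriptions of the same spectral data.
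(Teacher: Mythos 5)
Your proof is correct and follows essentially the same route as the paper's: reduce the pencil to an analytic family of the form identity-minus-compact (compactness coming from the same Hilbert--Schmidt kernel computation as in Proposition~\ref{pr:pre.T_cl}), anchor invertibility at one point via Lemma~\ref{lem:spop.resol_set}, and invoke the analytic Fredholm theorem (which the paper cites as Gohberg's theorem) together with the equivalence of invertibility under the factorization. The only cosmetic difference is that you normalize by $(A-\mu I)^{-1}$ with $\mu\in\rho(A)$, whereas the paper normalizes by $T(\la_0)^{-1}$ with $\la_0\in\rho(T)$ after recentering the pencil at $\la_0$.
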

\begin{proof}
Let us take some~$\la_0\in \rho(T)$ and rewrite~$T(\la)$ as
\[
    T(\la)=T(\lambda_0)+(\lambda-\lambda_0)[2\lambda_0I-B]+(\lambda-\lambda_0)^2I.
\]
Set~$\widehat{B}:=2\la_0I-B$,~$\widehat{A}:=T(\la_0)$,
and~$\mu=\la-\la_0$ and consider the operator
pencil~$\widehat{T}(\mu):=T(\la)T^{-1}(\la_0)$, which can be
written as
\[
    \widehat{T}(\mu):=I+\mu^2\widehat{A}^{-1}+\mu\widehat{B}\widehat{A}^{-1}.
\]
Using the arguments analogous to those used in the proof of
Proposition~\ref{pr:pre.T_cl} one can show that the
operator~$\mu^2\widehat{A}^{-1}+\mu\widehat{B}\widehat{A}^{-1}$ is
from the Hilbert--Schmidt class and so is compact. Then applying
the Gohberg theorem on analytic operator-valued
functions~\cite[Ch.I]{GohKre:1969} to the pencil~$I-S(\mu)$
with~$S(\mu):=-(\mu^2\widehat{A}^{-1}+\mu\widehat{B}\widehat{A}^{-1})$,
we obtain that for all~$\mu\in\bC$ except possibly some isolated
points the operator~$\widehat{T}(\mu)$ is boundedly invertible,
while these isolated points are eigenvalues of~$\widehat{T}$ of
finite algebraic multiplicity. This shows that the spectrum
of~$\widehat{T}$ is a discrete subset of~$\bC$.

Assume~$\la\in\sigma(T)$, which by Lemma~\ref{lem:spop.sp_point}
means that~$\la\in\sigma_{\mathrm{p}}(T)$, and let~$x$ be the
corresponding eigenfunction. Then~$y=T^{-1}(\la_0)x$ is an
eigenfunction of~$\widehat{T}$ corresponding to the
eigenvalue~$\mu=\la-\la_0$. Therefore,
\[
    \la\in\sigma(T)\Rightarrow
    \mu=\la-\la_0\in\sigma(\widehat{T}).
\]
Observe also that if~$ \la\in\rho(T)$,~i.~e. if~$T(\la)$ is
boundedly invertible, then the operator~$T(\la_0)T^{-1}(\la)$ is
closable, defined on the whole space $L_2(0,1)$, and thus bounded
by the closed graph theorem~\cite[Theorem~III.5.20]{Kat:1966}.
Direct verification shows that it is the inverse operator
of~$\widehat{T}(\mu)$ with~$\mu=\la-\la_0$. Therefore
\[
   \la\in\rho(T)\Rightarrow \mu=\la-\la_0\in\rho(\widehat{T}).
\]
These two implications give the equivalence
\[
    \la\in\sigma(T)\Leftrightarrow
    \mu=\la-\la_0\in\sigma(\widehat{T});
\]
thus the spectrum of the operator pencil~$T$ is discrete in
$\mathbb{C}$ along with the spectrum of~$\widehat{T}$.
\end{proof}

\begin{remark}\label{rem:0inRho}
Without loss of generality we may and shall assume further in this
paper that~$0$ is not in~$\sigma(T)$ or, equivalently, that the
operator~$A$ is boundedly invertible. In view of the above lemma
we can always achieve this by shifting of the spectral parameter
by a real number.
\end{remark}

As was noted in Section~\ref{sec:pre}, every eigenvalue of~$T$ is
geometrically simple. However, in general the spectrum of the
operator pencil~$T$ is not necessarily real or algebraically
simple as the following example demonstrates.

\begin{example}
Consider the operator pencil
\[
T(\lambda):=\lambda^2-2\lambda
\pi+\frac{d^2}{dx^2}+5\pi^2=(\la-\pi)^2+4\pi^2+\frac{d^2}{dx^2},
\]
i.e. the pencil $T$ with $p\equiv\pi$ and $q=r'\equiv-5\pi^2$.
Then~$\lambda_{\pm1}=(1\pm i\sqrt{3})\pi$ are complex conjugate
eigenvalues of this operator pencil, while~$\lambda_2=\pi$ is its
eigenvalue of algebraic multiplicity at least 2,
since~$y_0=\sin2\pi x$ and~$y_1\equiv0$ form the corresponding
chain of eigen- and associated vectors.
\end{example}

We summarize the above considerations in the following theorem.
\begin{theorem}
The spectrum of the operator pencil~$T$ of~\eqref{eq:pre.T} is a
discrete subset of~$\bC$ and consists of geometrically simple
eigenvalues.
\end{theorem}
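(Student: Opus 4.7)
The theorem is essentially a summary of the preceding results, so my plan is to assemble the pieces that have already been established rather than prove anything new from scratch.

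First, I would dispose of discreteness: this is exactly the content of Lemma~\ref{lem:psp.T_dis_sp}, which shows that $\sigma(T)$ is a discrete subset of $\bC$ (via the Gohberg analytic Fredholm argument applied to $\widehat T(\mu) = T(\la)T^{-1}(\la_0)$ for a fixed $\la_0 \in \rho(T)$, where $\rho(T)\neq\emptyset$ by Lemma~\ref{lem:spop.resol_set}). By Lemma~\ref{lem:spop.sp_point} every point of $\sigma(T)$ is in fact an eigenvalue, so it remains only to verify that each eigenvalue is geometrically simple.

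For geometric simplicity I would invoke the remark already made in Section~\ref{sec:pre}: for any fixed $\la_0 \in \bC$, the operator $-T(\la_0) = A + \la_0 B - \la_0^2 I$ acts as the Sturm--Liouville differential expression $\ell(y) + 2\la_0 p(x) y - \la_0^2 y$ on $\dom A$, hence as a Sturm--Liouville operator with (distributional) potential $q + 2\la_0 p - \la_0^2$ and Dirichlet boundary conditions. The initial value problem for $\ell(y) = \mu y + f$ with prescribed $y(x_0)$ and $y^{[1]}(x_0)$ has a unique solution (as noted in the preliminaries), so any solution of $T(\la_0)y = 0$ with $y(0)=0$ is determined up to a scalar by its quasi-derivative $y^{[1]}(0)$. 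Hence $\ker T(\la_0)$ is at most one-dimensional, i.e.\ every eigenvalue of $T$ is geometrically simple.

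No serious obstacle is anticipated, as the theorem is purely a consolidation statement; the only care needed is to cite the uniqueness of solutions of the regularized initial value problem correctly, so that the one-dimensionality of $\ker T(\la_0)$ is justified in the distributional potential setting rather than being asserted as a fact about classical Sturm--Liouville operators.
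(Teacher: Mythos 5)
Your proposal is correct and follows the paper's own route exactly: the paper states this theorem as a summary, with discreteness coming from Lemma~\ref{lem:psp.T_dis_sp} (via Lemmas~\ref{lem:spop.sp_point} and~\ref{lem:spop.resol_set}) and geometric simplicity from the remark at the end of Section~\ref{sec:pre} that $-T(\la_0)$ is a Sturm--Liouville operator with Dirichlet conditions, whose kernel is at most one-dimensional by uniqueness of the regularized initial value problem. Your added care about justifying that uniqueness in the distributional setting is a sensible refinement of what the paper leaves implicit, not a departure from it.
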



\section{Linearization and its properties}\label{sec:Lin}

In this section we shall recast the spectral problem for the
operator pencil~$T$ as a spectral problem for some linear
operator~$\cL$ and show equivalence of these problems. Considering
$\mathcal{L}$ in a specially defined Pontryagin space will then
reveal some further spectral properties of the pencil~$T$.

\subsection{Linearization}

Setting~$u_1:=y$ and~$u_2:=\lambda y$, we recast the
problem~\eqref{eq:intr.spr}--\eqref{eq:pre.b.c.} as the first
order system
\begin{equation}\label{eq:sys}
\begin{aligned}
u_2&=\la u_1 \\
Au_1+Bu_2&=\la u_2.
\end{aligned}
\end{equation}
The system~\eqref{eq:sys} is the spectral problem for the operator
\begin{equation}\label{eq:Lin.L0}
\cL_0:=\left(%
\begin{array}{cc}
  0 & I \\
  A & B \\
\end{array}%
\right).
\end{equation}
Therefore the spectral properties of the operator pencil~$T$
should be closely related to those of the operator~$\cL_0$. The
latter should be considered in the so called energy space $\cE$
which we next define. Recall that  the operator~$A$ is supposed to
be boundedly invertible (see Remark~\ref{rem:0inRho}). Denote
by~$H$ the space~$L_2(0,1)$ and by~$H_{\alpha}$, $\al\in\bR$, the
scale of Hilbert spaces generated by the operator~$A$. Thus the
space~$H_0$ coincides with~$H$, for any~$\al>0$ the space~$H_\al$
is the domain of the operator~$|A|^{\al}$ endowed with the
norm~$\|x\|_\al:=\||A|^\al x\|$, and for~$\al<0$ the space~$H_\al$
is the completion of~$H$ by the norm~$\|\cdot\|_\al$. Since the
operator~$A$ has compact resolvent for every~$\beta<\al$, the
embedding~$H_\al\hookrightarrow H_\beta$ is compact. Note that for
any~$\al>\theta$ the restriction of the operator~$A^{\al}$
to~$A^{\al}:H_\theta\rightarrow H_{\theta-\al}$ is homeomorphism.
Similarly, for ~$\al<\theta$ the extension of the
operator~$A^{\al}$ to~$A^{\al}:H_\theta\rightarrow H_{\theta-\al}$
is homeomorphism.

Introduce the Hilbert space~$(\cE, (\cdot,\cdot)_\cE)$,
where~$\cE:=H_{1/2}\times H$ and the scalar
product~$(\cdot,\cdot)_\cE$ is given by
\[
    (\mathbf{x},\mathbf{y})_\cE=(|A|^{1/2}x_1,|A|^{1/2}y_1)+(x_2,y_2)
\]
for every~$\mathbf{x}=(x_1,x_2)$ and $\mathbf{y}=(y_1,y_2)$
in~$\cE$. Then the operator~$\cL_0$ of~\eqref{eq:Lin.L0} is well
defined on the domain
\[
\dom \cL_0:=\{(u_1,u_2)^\mathrm{t}\mid u_1\in H_1;\; u_2\in
H_{1/2}\cap \dom B\}.
\]
However~$\cL_0$ is not closed on this domain. To describe its
closure, we need the following auxiliary result.
\begin{lemma}\label{lem:lin.B_comp}
The operator~$B$ extends by continuity to a compact
mapping~$\widetilde B$ from~$H_{1/2}$ to~$H_{-1/2}$.
\end{lemma}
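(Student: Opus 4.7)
The plan is to factor the extension $\widetilde B\colon H_{1/2}\to H_{-1/2}$ through the pivot space $H_0=L_2(0,1)$ as a bounded multiplication followed by a compact embedding. The three ingredients I would combine are: every $y\in H_{1/2}$ is continuous on $[0,1]$ with $\|y\|_\infty\le C\|y\|_{1/2}$ for some constant $C>0$; multiplication by $2p\in L_2(0,1)$ then sends $H_{1/2}$ boundedly into $H_0$; and the natural embedding $H_0\hookrightarrow H_{-1/2}$ is compact.

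First I would identify the form domain of $A$ (which coincides with $H_{1/2}$ since $A$ is self-adjoint and, by Remark~\ref{rem:0inRho}, boundedly invertible) with the classical Sobolev space $\mathring W_2^1(0,1)$ of functions vanishing at the endpoints, equipped with an equivalent norm. For the Savchuk--Shkalikov regularization this equivalence is known from~\cite{SavShk:1999,SavShk:2003}: the quadratic form associated with $\ell=-(\cdot)''+q$ differs from the Dirichlet Laplacian form only by lower-order form-bounded terms involving $r\in L_2$, and the two forms share the same domain. The one-dimensional Sobolev embedding $\mathring W_2^1(0,1)\hookrightarrow C[0,1]$ then yields the estimate $\|y\|_\infty\le C\|y\|_{1/2}$ for all $y\in H_{1/2}$.

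Given this, for every $y\in H_{1/2}$ one has
\[
\|By\|_{H_0}=\|2py\|_{L_2}\le 2\|p\|_{L_2}\|y\|_\infty\le 2C\|p\|_{L_2}\|y\|_{1/2},
\]
so $B$ extends by continuity from its natural dense domain to a bounded operator $H_{1/2}\to H_0$. Since the resolvent of $A$ is compact (already used in the proof of Proposition~\ref{pr:pre.T_cl}), the embedding $H_{1/2}\hookrightarrow H_0$ is compact, and by duality so is $H_0\hookrightarrow H_{-1/2}$. Composing the bounded multiplication $H_{1/2}\to H_0$ with this compact embedding produces the required compact extension $\widetilde B\colon H_{1/2}\to H_{-1/2}$.

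The main obstacle is the identification $H_{1/2}\cong\mathring W_2^1(0,1)$ for the distributional potential $q\in W_2^{-1}$; once this is taken from the Savchuk--Shkalikov framework, the rest is a routine combination of the Sobolev embedding, the estimate displayed above, and the duality compactness in the scale $H_\alpha$.
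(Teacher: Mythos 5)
Your argument is correct, but it follows a genuinely different route from the paper's. The paper never leaves the abstract scale $H_\alpha$: it first shows $BA^{-1}$ is compact on $H$ (hence $B\colon H_1\to H_0$ is compact), then uses the symmetry of $B$ and a duality estimate to get boundedness of $B\colon H_0\to H_{-1}$, and finally invokes the interpolation theorem for compact operators (the reference \cite{Per:63}) to conclude compactness at the midpoint $H_{1/2}\to H_{-1/2}$. You instead make the space $H_{1/2}$ concrete, identifying it with the form domain $\mathring W_2^1(0,1)$ via the Savchuk--Shkalikov form-boundedness result, and then factor $\widetilde B$ as the bounded multiplication $H_{1/2}\to H_0$ (using the one-dimensional embedding $\mathring W_2^1\hookrightarrow C[0,1]$ and $\|2py\|_{L_2}\le 2\|p\|_{L_2}\|y\|_\infty$) followed by the compact embedding $H_0\hookrightarrow H_{-1/2}$, which the paper has already recorded as a consequence of the compactness of the resolvent of $A$. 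Both steps of your factorization are sound, and $H_{1/2}\subset\dom B$ so the word ``extension'' is unproblematic. What each approach buys: yours avoids interpolation theory entirely and is more elementary, but it leans on the concrete identification of the form domain for a distributional potential $q\in W_2^{-1}$, which you correctly flag as the one nontrivial input; the paper's argument is purely operator-theoretic and would survive in situations where $H_{1/2}$ does not embed into $L_\infty$, at the price of quoting the compact-interpolation theorem. Incidentally, your proof yields slightly more than the lemma claims, namely that $\widetilde B$ actually maps $H_{1/2}$ boundedly (indeed compactly) into $H_0$, not merely into $H_{-1/2}$.
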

\begin{proof}
Using the arguments analogous to those in the proof of
Proposition~\ref{pr:pre.T_cl} one can show that the
operator~$BA^{-1}:H\rightarrow H$ is compact. This yields the
compactness~$B:H_1\rightarrow H$ as a mapping from~$H_1$ to~$H$.

Observe that the space~$H_{-1}$ is dual to~$H_1$ with respect to
the scalar product~$(\cdot,\cdot)$. Denoting
by~$\langle\cdot,\cdot\rangle$ the pairing between~$H_{-1}$
and~$H_1$, we get for~$x\in H_1$ that
\begin{align*}
\|Bx\|_{-1} & = \sup_{y\in H_1\,:\,\|y\|_1=1}|\langle Bx,y
\rangle|  = \sup_{y\in H_1\,:\,\|y\|_1=1}|(Bx,y)| \\ &= \sup_{y\in
H_1\,:\,\|y\|_1=1}|(x,By)| \le \|x\|_0\|B\|_{H_1\to H_0}.
\end{align*}
Therefore~$B$ extends by continuity to a bounded mapping
from~$H_0$ to~$H_{-1}$. Now using the interpolation theorem for
compact operators~\cite{Per:63} we obtain compactness
of~$\widetilde{B}:H_{1/2}\rightarrow H_{-1/2}$.
\end{proof}

Next observe that the operator~$A$ can be extended by continuity
to a homeomorphism~$\widetilde{A}:H_{1/2}\rightarrow H_{-1/2}$.

\begin{lemma}[\hspace*{-5pt}\cite{HrShk:2004}]
The operator~$\cL_0$ is closable and the closure~$\cL$ is given by
the formulae
\begin{equation}
\begin{aligned}
    &\cL\left(%
\begin{array}{c}
  x_1 \\
  x_2 \\
\end{array}%
\right)=\left(%
\begin{array}{c}
  x_2 \\
  \widetilde{A}x_1+\widetilde{B}x_2 \\
\end{array}%
\right),\\
\dom\cL&=\left\{(x_1,x_2)^\mathrm{t}\mid x_1,x_2\in H_{1/2},\;
\widetilde{A}x_1+\widetilde{B}x_2\in H\right\}.
\end{aligned}
\end{equation}
\end{lemma}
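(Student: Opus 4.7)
The plan is to verify directly that the operator $\cL$ described in the statement is closed and that $\dom\cL_0$ is dense in $\dom\cL$ with respect to the graph norm of $\cL$. Combined with the obvious inclusion $\cL_0\subset\cL$, this yields both that $\cL_0$ is closable and that its closure coincides with $\cL$.

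First I would check that $\cL$ is a closed extension of $\cL_0$. For $(u_1,u_2)^{\mathrm t}\in\dom\cL_0$ we have $u_1\in H_1\subset H_{1/2}$, $u_2\in H_{1/2}\cap\dom B$, and $\widetilde Au_1=Au_1\in H$, $\widetilde Bu_2=Bu_2\in H$, so $(u_1,u_2)^{\mathrm t}\in\dom\cL$ with the two actions agreeing. For closedness, take a sequence $(x_1^{(n)},x_2^{(n)})^{\mathrm t}$ in $\dom\cL$ converging in $\cE$ to $(x_1,x_2)^{\mathrm t}$ with $\cL(x_1^{(n)},x_2^{(n)})^{\mathrm t}\to(y_1,y_2)^{\mathrm t}$ in $\cE$. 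The first image component gives $x_2^{(n)}\to y_1$ in $H_{1/2}$, hence in $H$, forcing $y_1=x_2\in H_{1/2}$. Continuity of $\widetilde A$ and $\widetilde B$ from $H_{1/2}$ to $H_{-1/2}$ (the latter being Lemma~4.1) then yields $\widetilde Ax_1^{(n)}+\widetilde Bx_2^{(n)}\to\widetilde Ax_1+\widetilde Bx_2$ in $H_{-1/2}$; since this sequence also converges in $H\hookrightarrow H_{-1/2}$ to $y_2$, one concludes $y_2=\widetilde Ax_1+\widetilde Bx_2\in H$, whence $(x_1,x_2)^{\mathrm t}\in\dom\cL$.

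For the density step, fix $(x_1,x_2)^{\mathrm t}\in\dom\cL$. Let $\{e_k\}$ be an orthonormal basis of $H$ consisting of eigenfunctions of the self-adjoint operator $A$; each $e_k$ lies in $\dom A$ and is therefore bounded, so sits in $\dom B$. Set $u_2^{(n)}:=P_n x_2$ with $P_n$ the orthogonal projection onto $\mathrm{span}\{e_1,\dots,e_n\}$; then $u_2^{(n)}\in\dom A\subset H_{1/2}\cap\dom B$ and $u_2^{(n)}\to x_2$ in $H_{1/2}$. Now define
\[
u_1^{(n)}:=A^{-1}\bigl(\widetilde Ax_1+\widetilde Bx_2-Bu_2^{(n)}\bigr).
\]
The argument lies in $H$ because $\widetilde Ax_1+\widetilde Bx_2\in H$ by hypothesis and $Bu_2^{(n)}\in H$ by choice of $u_2^{(n)}$, so $u_1^{(n)}\in H_1$ and $(u_1^{(n)},u_2^{(n)})^{\mathrm t}\in\dom\cL_0$. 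By construction $Au_1^{(n)}+Bu_2^{(n)}=\widetilde Ax_1+\widetilde Bx_2$ for every $n$, so the second component of $\cL_0(u_1^{(n)},u_2^{(n)})^{\mathrm t}$ equals that of $\cL(x_1,x_2)^{\mathrm t}$ identically. Rewriting $u_1^{(n)}=x_1+\widetilde A^{-1}\widetilde B(x_2-u_2^{(n)})$ and using the homeomorphism $\widetilde A^{-1}:H_{-1/2}\to H_{1/2}$ together with continuity of $\widetilde B:H_{1/2}\to H_{-1/2}$ gives $u_1^{(n)}\to x_1$ in $H_{1/2}$, which with $u_2^{(n)}\to x_2$ in $H_{1/2}\subset H$ completes graph-norm convergence.

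The main obstacle is precisely the construction in the density step. Naively approximating $x_1$ and $x_2$ separately in $H_{1/2}$ only controls $\widetilde Ax_1^{(n)}+\widetilde Bx_2^{(n)}$ in $H_{-1/2}$, which is too weak for the $H$-convergence required in the second coordinate of the graph norm, and it need not preserve the constraint $\widetilde Ax_1+\widetilde Bx_2\in H$ along the approximating sequence. The device of absorbing the entire discrepancy $\widetilde Ax_1+\widetilde Bx_2-Bu_2^{(n)}$ into $u_1^{(n)}$ through the homeomorphism $A^{-1}:H\to H_1$ bypasses this difficulty by rendering the second image component constant in $n$.
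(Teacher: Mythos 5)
Your argument is correct, and it is worth noting that the paper offers no proof of this lemma at all --- it is imported from \cite{HrShk:2004} --- so there is nothing in the text to compare against; your write-up stands as a self-contained justification. Both halves check out. The closedness of $\cL$ follows exactly as you say from the continuity of $\widetilde A,\widetilde B:H_{1/2}\to H_{-1/2}$ together with the continuous embedding $H\hookrightarrow H_{-1/2}$, and the inclusion $\cL_0\subset\cL$ is immediate. The genuinely nontrivial point is the graph-norm density of $\dom\cL_0$ in $\dom\cL$, and your device of taking $u_2^{(n)}:=P_nx_2$ (spectral truncation, so that $u_2^{(n)}\in\dom A\subset H_{1/2}\cap\dom B$ and $u_2^{(n)}\to x_2$ in $H_{1/2}$ because $P_n$ commutes with $|A|^{1/2}$) and then absorbing the entire discrepancy into $u_1^{(n)}:=A^{-1}\bigl(\widetilde Ax_1+\widetilde Bx_2-Bu_2^{(n)}\bigr)$ is exactly the right move: it freezes the second image component at $\widetilde Ax_1+\widetilde Bx_2$, so only the $H_{1/2}$-convergence of $u_1^{(n)}$ and $u_2^{(n)}$ remains, and the identity $u_1^{(n)}=x_1+\widetilde A^{-1}\widetilde B(x_2-u_2^{(n)})$ delivers it via the boundedness of $\widetilde A^{-1}:H_{-1/2}\to H_{1/2}$. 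The only steps worth making explicit are the two consistency facts this identity uses, namely $\widetilde Bu=Bu$ for $u\in H_1$ and $\widetilde A^{-1}\bigl|_H=A^{-1}$; both are immediate from the definitions of the extensions, and with them in place the proof is complete.
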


We are going to show the coincidence of the spectra of~$T$
and~$\cL$. To start with we prove that the spectrum of~$\cL$ is
discrete.

\begin{lemma}\label{lem:lin.L_sp_dis}
The spectrum of the operator~$\cL$ is discrete.
\end{lemma}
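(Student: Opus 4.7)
The plan is to establish discreteness by showing that $0\in\rho(\cL)$ and that $\cL^{-1}$ is a compact operator on $\cE$; the conclusion then follows from Remark~\ref{rem:pre.DisSp}.

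First I would verify that $\cL$ is a bijection of $\dom\cL$ onto $\cE$. Since $\widetilde A:H_{1/2}\to H_{-1/2}$ is a homeomorphism (by Remark~\ref{rem:0inRho} and the construction of the scale $H_\al$), the equation $\cL(x_1,x_2)^{\mathrm{t}}=(y_1,y_2)^{\mathrm{t}}$ is equivalent to $x_2=y_1$ and $\widetilde A x_1=y_2-\widetilde B y_1$. For $(y_1,y_2)\in\cE=H_{1/2}\times H$ the right-hand side of the latter equation lies in $H_{-1/2}$ (since $\widetilde B:H_{1/2}\to H_{-1/2}$ by Lemma~\ref{lem:lin.B_comp} and $H\hookrightarrow H_{-1/2}$), hence possesses a unique solution $x_1\in H_{1/2}$. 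One checks that $(x_1,x_2)\in\dom\cL$, which yields the block representation
\begin{equation*}
\cL^{-1}=\begin{pmatrix}-\widetilde A^{-1}\widetilde B & \widetilde A^{-1}\\ I & 0\end{pmatrix}
\end{equation*}
acting from $\cE$ to $\cE$.

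Next I would show that each entry is compact when viewed with the topology of $\cE$. The upper-left block $-\widetilde A^{-1}\widetilde B:H_{1/2}\to H_{1/2}$ is compact because $\widetilde B:H_{1/2}\to H_{-1/2}$ is compact by Lemma~\ref{lem:lin.B_comp} and $\widetilde A^{-1}:H_{-1/2}\to H_{1/2}$ is continuous. The upper-right block $\widetilde A^{-1}:H\to H_{1/2}$ is compact, since it factors as the compact embedding $H\hookrightarrow H_{-1/2}$ (compactness of the resolvent of $A$) followed by the homeomorphism $\widetilde A^{-1}:H_{-1/2}\to H_{1/2}$. The lower-left block is simply the embedding $H_{1/2}\hookrightarrow H$, which is compact for the same reason. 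Since all blocks are compact (and the lower-right block is zero), $\cL^{-1}$ is compact on $\cE$.

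Finally, compactness of $\cL^{-1}$ together with $0\in\rho(\cL)$ implies via Remark~\ref{rem:pre.DisSp} that $\sg(\cL)$ is discrete. The only delicate point is keeping track of the spaces $H_{\pm 1/2}$ in which the extensions $\widetilde A$ and $\widetilde B$ act; once this bookkeeping is done, each claim reduces to the already-established compactness of $\widetilde B:H_{1/2}\to H_{-1/2}$ and the compact embeddings $H_\al\hookrightarrow H_\beta$ for $\al>\beta$.
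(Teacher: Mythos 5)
Your proposal is correct and follows essentially the same route as the paper: invert $\cL$ explicitly as a block matrix, verify compactness of each entry using Lemma~\ref{lem:lin.B_comp} and the compact embeddings of the scale $H_\al$, and conclude via Remark~\ref{rem:pre.DisSp}. The only cosmetic difference is that you factor the upper-right block through $H\hookrightarrow H_{-1/2}$ followed by $\widetilde A^{-1}$, whereas the paper factors it as $A^{-1}:H\to H_1$ followed by $H_1\hookrightarrow H_{1/2}$; both are valid.
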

\begin{proof}
By Remark~\ref{rem:pre.DisSp}, to show discreteness of the
spectrum of~$\cL$ it is enough to establish that its
inverse~$\cL^{-1}$ is compact.

Consider the system
\begin{align*}
x_2&=y_1,\\
\widetilde{A}x_1+\widetilde{B}x_2&=y_2.
\end{align*}
with~$\mathbf{x}=(x_1,x_2)^\mathrm{t}$ from~$\dom\cL$
and~$\mathbf{y}=(y_1,y_2)^\mathrm{t}$ from~$\cE$. Since the
operator~$\widetilde{A}$ is boundedly invertible (see
Remark~\ref{rem:0inRho}) this system gives that the operator~$\cL$
is boundedly invertible and its inverse~$\cL^{-1}$ is given by the
matrix
\[
    \cL^{-1}=\left(
              \begin{array}{cc}
                -\widetilde{A}^{-1}\widetilde{B} & A^{-1} \\
                I & 0 \\
              \end{array}
            \right)
\]
on~$\cE$. Next we show that~$\cL^{-1}:\mathcal{E} \rightarrow
\mathcal{E}$ is a compact operator. To do this we show compactness
of all entries of the corresponding matrix.

The operator~$I:H_{1/2}\rightarrow H$ is an embedding of the
space~$H_{1/2}$ into~$H$ and thus it is compact.

Since the operator~$A^{-1}:H\rightarrow H_{1}$ is bounded and the
embedding~$H_1\hookrightarrow H_{1/2}$ is compact the
operator~$A^{-1}:H\rightarrow H_{1/2}$ is compact as the
composition of a bounded operator and a compact operator.

It was established in Lemma~\ref{lem:lin.B_comp} that the
operator~$\widetilde{B}:H_{1/2}\rightarrow H_{-1/2}$ is compact.
Since the operator~$\widetilde{A}^{-1}:H_{-1/2}\rightarrow
H_{1/2}$ is bounded this gives compactness
of~$\widetilde{A}^{-1}\widetilde{B}:H_{1/2}\rightarrow H_{1/2}$.

These observations yield compactness of~$\cL^{-1}$ and complete
the proof.
\end{proof}

For~$\la\in\bC$ we set
\begin{equation}
    \widetilde{T}(\la):=\la^2I-\la \widetilde{B}-\widetilde{A}.
\end{equation}
and consider~$\widetilde{T}(\la)$ as an operator from~$H_{1/2}$
to~$H_{-1/2}$.

\begin{theorem}[\hspace*{-5pt}\cite{HrShk:2004}]\label{thm:lin.L_Tt}
The spectrum of the operator~$\cL$ coincides with the
spectrum~$\sigma(\widetilde{T})$ of the operator
pencil~$\widetilde{T}$. For every
nonzero~$\la\in\rho(\widetilde{T})$ the following representation
holds:
\begin{equation}\label{eq:lin.repRes}
    (\cL-\la I)^{-1}=\left(%
\begin{array}{cc}
  -\la^{-1}(\widetilde{T}^{-1}(\la)\widetilde{A}+ I) & -\widetilde{T}(\la)^{-1} \\
  -\widetilde{T}^{-1}(\la)\widetilde{A} & -\la \widetilde{T}(\la)^{-1} \\
\end{array}%
\right).
\end{equation}
\end{theorem}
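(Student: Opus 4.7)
The plan is to solve the resolvent equation $(\cL - \la I)\mathbf{x} = \mathbf{y}$ explicitly in terms of $\widetilde{T}(\la)^{-1}$; this will simultaneously yield the representation~\eqref{eq:lin.repRes} and the coincidence of the spectra. Throughout I take $\la \in \bC \setminus \{0\}$ and treat $\la = 0$ separately at the end.

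Fix $\mathbf{y} = (y_1, y_2)^{\mathrm{t}} \in \cE$ and seek $\mathbf{x} = (x_1, x_2)^{\mathrm{t}} \in \dom \cL$ with $(\cL - \la I)\mathbf{x} = \mathbf{y}$. The equation splits as
\[
    x_2 - \la x_1 = y_1, \qquad \widetilde{A}x_1 + (\widetilde{B} - \la I)x_2 = y_2.
\]
Eliminating $x_2 = \la x_1 + y_1$ gives $-\widetilde{T}(\la)x_1 = y_2 - (\widetilde{B} - \la I)y_1$. Since $\la \neq 0$, the identity $\widetilde{B} - \la I = -\la^{-1}(\widetilde{T}(\la) + \widetilde{A})$ recasts this as
\[
    x_1 = -\la^{-1}\bigl(\widetilde{T}^{-1}(\la)\widetilde{A} + I\bigr)y_1 - \widetilde{T}^{-1}(\la)y_2
\]
whenever $\la \in \rho(\widetilde{T})$, and substituting back into $x_2 = \la x_1 + y_1$ reproduces the second row of the matrix in~\eqref{eq:lin.repRes}.

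For $\la \in \rho(\widetilde{T}) \setminus \{0\}$ I would next verify that the right-hand side of~\eqref{eq:lin.repRes} defines a bounded operator on $\cE$, maps $\cE$ into $\dom \cL$, and is a two-sided inverse of $\cL - \la I$. Boundedness is a bookkeeping exercise using the mapping properties $\widetilde{T}^{-1}(\la) : H_{-1/2} \to H_{1/2}$, the homeomorphism $\widetilde{A} : H_{1/2} \to H_{-1/2}$, and the continuous embedding $H \hookrightarrow H_{-1/2}$; the inclusion into $\dom \cL$ can be read off directly from the derivation above. This already gives $\rho(\widetilde{T}) \setminus \{0\} \subseteq \rho(\cL)$ together with the formula~\eqref{eq:lin.repRes}.

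For the reverse inclusion, suppose $\la \in \rho(\cL) \setminus \{0\}$. If $\widetilde{T}(\la)u = 0$ for some nonzero $u \in H_{1/2}$, then the pair $(u, \la u)^{\mathrm{t}}$ lies in $\dom \cL$ and is annihilated by $\cL - \la I$, contradicting $\la \in \rho(\cL)$. Thus $\widetilde{T}(\la)$ is injective; a Fredholm alternative modeled on Lemma~\ref{lem:psp.T_dis_sp}, now performed on $H_{1/2}$ via the representation $\widetilde{A}^{-1}\widetilde{T}(\la) = -I + K(\la)$ with $K(\la)$ compact (using compactness of $\widetilde{A}^{-1}$ on $H_{1/2}$ and the compactness of $\widetilde{B}$ from Lemma~\ref{lem:lin.B_comp}), then promotes injectivity to invertibility, whence $\la \in \rho(\widetilde{T})$. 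The case $\la = 0$ is immediate: under Remark~\ref{rem:0inRho}, Lemma~\ref{lem:lin.L_sp_dis} gives $0 \in \rho(\cL)$, while $\widetilde{T}(0) = -\widetilde{A}$ is a homeomorphism $H_{1/2} \to H_{-1/2}$, so $0 \in \rho(\widetilde{T})$. The main obstacle is the careful tracking of the Hilbert-scale spaces: one must ensure each composition in~\eqref{eq:lin.repRes} lands in the correct space and that the resulting column vector actually belongs to $\dom \cL$; the algebra itself is routine.
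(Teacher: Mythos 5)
The paper does not prove this theorem at all: it is imported verbatim from \cite{HrShk:2004}, so there is no in-paper argument to compare against. Judged on its own, your proof is correct and is the natural one. The elimination $x_2=\la x_1+y_1$, the identity $\widetilde B-\la I=-\la^{-1}(\widetilde T(\la)+\widetilde A)$, and the resulting expressions for $x_1$ and $x_2$ reproduce both rows of \eqref{eq:lin.repRes} exactly, and your verification plan (boundedness via $\widetilde T^{-1}(\la):H_{-1/2}\to H_{1/2}$, $\widetilde A:H_{1/2}\to H_{-1/2}$, $H\hookrightarrow H_{-1/2}$; membership of the output in $\dom\cL$ because $\widetilde Ax_1+\widetilde Bx_2=y_2+\la x_2\in H$) is the right bookkeeping. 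The converse inclusion is also handled correctly: the eigenvector $(u,\la u)^{\mathrm t}$ does lie in $\dom\cL$ since $\widetilde Au+\la\widetilde Bu=\la^2u\in H$, and the Fredholm step $\widetilde A^{-1}\widetilde T(\la)=-I+K(\la)$ with $K(\la)$ compact on $H_{1/2}$ legitimately upgrades injectivity to bounded invertibility. Two small points of precision: the phrase ``compactness of $\widetilde A^{-1}$ on $H_{1/2}$'' should really read that $\widetilde A^{-1}$ restricted to $H_{1/2}$ is compact as a map into $H_{1/2}$ because it factors through the compact embedding $H_{1/2}\hookrightarrow H_{-1/2}$ followed by the bounded $\widetilde A^{-1}:H_{-1/2}\to H_{1/2}$; and your treatment of $\la=0$ relies on the standing assumption of Remark~\ref{rem:0inRho}, which is fine inside this paper but means your argument establishes the spectral coincidence under that normalization rather than the unconditional statement of \cite{HrShk:2004}. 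Neither point is a gap. Note also that the paper offers an alternative route to the spectral coincidence (via Lemma~\ref{lem:lin.L_sp_dis} and the cited result of \cite{Shk:1996} on discrete spectra of $T$ and $\widetilde T$) in the remarks following Theorem~\ref{thm:lin.coin}; your direct computation buys the explicit resolvent formula, which that route does not.
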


Now we can show coincidence of the spectra of the operator~$\cL$
and of the operator pencil~$T$. In view of
Lemmas~\ref{lem:psp.T_dis_sp} and~\ref{lem:lin.L_sp_dis}, it is
sufficient to show coincidence of the corresponding eigenvalues.

\begin{theorem}\label{thm:lin.coin}
The eigenvalues of the operator pencil~$T$ coincide with those of
the operator $\cL$ counting multiplicities.
\end{theorem}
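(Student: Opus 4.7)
The plan is to construct an explicit bijection between chains of eigen- and associated vectors of the pencil $T$ at $\la$ and Jordan chains of the linear operator $\cL$ at $\la$, and then to check that it preserves length. The set-theoretic equality $\sigma_\mathrm{p}(T)=\sigma_\mathrm{p}(\cL)$ will be a by-product, and the algebraic multiplicities will match because each eigenvalue of $T$ is geometrically simple (see Section~\ref{sec:pre}), so the bijection will force $\cL$-eigenvalues to be geometrically simple as well, and in both settings the algebraic multiplicity equals the length of the longest chain.

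For the forward direction, given a chain $y_0,\dots,y_{m-1}$ of $T$ at $\la$, I would set $\mathbf{x}_j:=(y_j,\la y_j+y_{j-1})^{\mathrm{t}}$ with the convention $y_{-1}:=0$. Since $T^{(k)}\equiv 0$ for $k\ge 3$, the pencil chain relations reduce to
\[
T(\la)y_j+(2\la I-B)y_{j-1}+y_{j-2}=0,\qquad j=0,\dots,m-1,
\]
with $y_{-1}=y_{-2}=0$. Because each $y_j$ lies in $\dom T\subset H_1\subset H_{1/2}$ and consists of bounded functions, the expression $\widetilde{A}y_j+\widetilde{B}(\la y_j+y_{j-1})$ reduces to $Ay_j+\la By_j+By_{j-1}\in H$, so $\mathbf{x}_j\in\dom\cL$; a short direct computation using the displayed relations then gives $(\cL-\la I)\mathbf{x}_j=\mathbf{x}_{j-1}$, and $\mathbf{x}_0,\dots,\mathbf{x}_{m-1}$ is a Jordan chain of $\cL$ of the same length $m$.

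For the reverse direction, given a Jordan chain $\mathbf{x}_0,\dots,\mathbf{x}_{m-1}$ of $\cL$ at $\la$, the first row of $(\cL-\la I)\mathbf{x}_j=\mathbf{x}_{j-1}$ forces $\mathbf{x}_j=(y_j,\la y_j+y_{j-1})^{\mathrm{t}}$ with $y_j$ the first coordinate of $\mathbf{x}_j$, while the second row yields $\widetilde{T}(\la)y_j+(2\la I-\widetilde{B})y_{j-1}+y_{j-2}=0$ in $H_{-1/2}$. The main obstacle is to upgrade $y_j\in H_{1/2}$ to $y_j\in\dom T$ so that these identities hold in $H$ and $y_0,\dots,y_{m-1}$ is a genuine $T$-chain. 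I would proceed by induction on $j$. For $j=0$, the distributional equation $-y_0''+qy_0+2\la p y_0=\la^2 y_0$ is, by the quasi-derivative regularization recalled in Section~\ref{sec:pre}, satisfied by a function in $\dom\ell$, and the form-domain membership $y_0\in H_{1/2}$ transfers the Dirichlet conditions, placing $y_0\in\dom A=\dom T$. For $j\ge 1$, the inductive hypothesis gives $y_{j-1}\in\dom T$, whose elements are bounded, so $\widetilde{B}y_{j-1}=By_{j-1}\in H$; then the right-hand side of $\widetilde{T}(\la)y_j=-(2\la I-\widetilde{B})y_{j-1}-y_{j-2}$ lies in $H$, and the same ODE regularity promotes $y_j$ to $\dom T$.

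With both directions established, the two constructions are visibly mutual inverses and preserve chain length. Consequently $\sigma_\mathrm{p}(T)=\sigma_\mathrm{p}(\cL)$, each $\cL$-eigenvalue is geometrically simple (two independent eigenvectors of $\cL$ would project to two independent eigenvectors of $T$), and the common algebraic multiplicity at every eigenvalue equals the length of the longest chain on either side, which proves the theorem.
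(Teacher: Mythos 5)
Your proposal is correct and follows the same skeleton as the paper's proof: the forward map $y_j\mapsto(y_j,\la y_j+y_{j-1})^{\mathrm t}$, the reverse extraction of first components yielding a $\widetilde T$-chain in $H_{1/2}$, and an induction to upgrade each $y_j$ to $\dom T$. The one place where you genuinely diverge is the mechanism of that upgrade. The paper argues abstractly: since $\widetilde T(\la_0)$ extends $T(\la_0)$, for any $\mu\in\rho(T(\la_0))\cap\rho(\widetilde T(\la_0))$ the two resolvents agree on $H$, so $(\widetilde T(\la_0)-\mu)u\in H$ already forces $u\in H_1$; the induction then only needs to check that $-\mu Y_{k,1}-(2\la_0-\widetilde B)Y_{k-1,1}+Y_{k-2,1}$ lies in $H$. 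You instead invoke concrete ODE regularity: read $\widetilde T(\la)y_j=f\in H$ as a distributional equation, use quasi-derivative regularization to place $y_j$ in $\dom\ell$, and use membership in $H_{1/2}$ to recover the Dirichlet conditions. This works, but it silently relies on two facts the paper never establishes and does not need: that the form domain $H_{1/2}$ of the singular operator $A$ embeds into continuous functions vanishing at the endpoints, and that a $W_2^1$ distributional solution of $\ell(y)=f$ with $f\in L_2$ automatically has $y^{[1]}\in AC$. Both are true (they follow from the Savchuk--Shkalikov regularization), but for $q\in W_2^{-1}$ they require proof; the paper's resolvent comparison buys exactly the avoidance of this concrete analysis, at the cost of being less transparent. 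Either route closes the argument, and your bookkeeping of lengths and multiplicities via geometric simplicity is fine.
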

\begin{proof}
Observe firstly that for every~$\lambda_0\in\mathbb{C}$ the
operator~$\widetilde T(\lambda_0)$ is an extension
of~$T(\lambda_0)$. Therefore if for
some~$\mu\in\rho(T(\la_0))\cap\rho(\widetilde{T}(\la_0))$ one
has~$(\widetilde{T}(\la_0)-\mu)u\in H$, then~$u$ belongs to~$H_1$,
i.e. to~$\dom T$. We shall use this remark in our further
discussions.

Assume that~$\la_0\in\bC$ is an eigenvalue of~$T$ with the
corresponding chain of eigen- and associated
vectors~$y_0,y_1,\dots, y_{m-1}$. By definition this means that
\[
    (\lambda_0^2-\lambda_0 B-A)y_k+(2\lambda_0-B)y_{k-1}+y_{k-2}=0
\]
for~$k=0,\dots, m-1$ with~$y_{-1}$, $y_{-2}$ being zero. A direct
verification shows that these equalities are equivalent to the
following
\[
    (\cL-\la_0)Y_k=Y_{k-1}, \quad k=0,\dots,m-1
\]
with~$Y_k=(y_k,\la y_k+y_{k-1})^{\mathrm{t}}$. In particular,
$\lambda_0$ is an eigenvalue of $\mathscr{L}$ and the
vectors~$Y_0,Y_1,\dots, Y_{m-1}$ belong to the domain of~$\cL$ and
so form a chain of eigen- and associated vectors of~$\cL$
corresponding to~$\la_0$.

Next suppose that~$\la_0\in\bC$ is an eigenvalue of~$\cL$ with the
corresponding chain of eigen- and associated
vectors~$Y_0,\dots,Y_{m-1}$ of length~$m$. By
definition,~$(\cL-\la_0)Y_k=Y_{k-1}$ ($Y_{-1}$ is supposed to be
zero) or, setting~$Y_k:=(Y_{k,1},Y_{k,2})^{\mathrm{t}}$,
\begin{align*}
-\la_0Y_{k,1}+Y_{k,2}&=Y_{k-1,1},\\
\widetilde{A}Y_{k,1}+(\widetilde{B}-\la_0)Y_{k,2}&=Y_{k-1,2}.
\end{align*}
This gives that~$Y_{k,2}=\la_0Y_{k,1}+Y_{k-1,1}$ and
\[
    (\la_0^2-\la_0 \widetilde{B}-\widetilde{A})Y_{k,1}+(2\la_0-\widetilde{B})Y_{k-1,1}+Y_{k-2,1}=0
\]
for~$k=0,\dots m-1$ and~$Y_{-1}=Y_{-2}=0$.
Since~$Y_k=(Y_{k,1},Y_{k,2})^{\mathrm{t}}\in \dom \cL$, we have
that~$Y_{k,1}$ is from~$H_{1/2}$. Thus the last equality yields
that~$Y_{0,1}, Y_{1,1}, \dots, Y_{m-1,1}$ is a chain of eigen- and
associated vectors of the operator pencil~$\widetilde{T}$
corresponding to the eigenvalue~$\la_0$.

Next we prove by induction that all the vectors~$Y_{0,1}, Y_{1,1},
\dots, Y_{m-1,1}$ belong to~$H_1$ and, therefore, form a chain of
eigen- and associated vectors of~$T$ corresponding to~$\la_0$.
Consider firstly the vector~$Y_0$.
Take~$\mu\in\rho(T(\la_0))\cap\rho(\widetilde{T}(\la_0))$ and
observe that~$(\widetilde{T}(\la_0)-\mu)Y_{0,1}=-\mu Y_{0,1}\in
H$. In view of remark made at the beginning of the proof, this
gives that~$Y_{0,1}$ belongs to~$H_1$ and so it is an eigenvector
of~$T$ corresponding to~$\la_0$. Now suppose that~$Y_{j,1}$
belongs to~$H_1$ for every~$j<k$.
Taking~$\mu\in\rho(\widetilde{T}(\la_0))\cap\rho(T(\la_0))$ we
obtain
\[
    (\widetilde{T}(\la_0)-\mu)Y_{k,1}=-\mu
    Y_{k,1}-(2\la_0-\widetilde{B})Y_{k-1,1}+Y_{k-2,1}\in H.
\]
By the same reason we deduce that~$Y_{k,1}$ is from~$H_1$; thus
the chain of eigen- and associated vectors of~$\cL$ generates the
corresponding chain for~$T$.

The above reasonings show that there is a one-to-one
correspondence between the chains of eigen- and associated vectors
of~$T$ and~$\cL$ corresponding to the same eigenvalues, thus
establishing the claim.
\end{proof}

Coincidence of the eigenvalues of~$\cL$ and~$T$ can be proved in
another way. Observe that Lemma~\ref{lem:lin.L_sp_dis} and
Theorem~\ref{thm:lin.L_Tt} imply that the spectrum
of~$\widetilde{T}$ is discrete. But it is known
(see~\cite{Shk:1996}) that the discrete parts of the spectra
of~$T$ and~$\widetilde{T}$ coincide. In view of
Proposition~\ref{lem:psp.T_dis_sp} this gives
that~$\sigma(T)=\sigma(\widetilde{T})$ and
so~$\sigma(T)=\sigma(\cL)$.

\subsection{The Pontryagin space properties of~$\cL$}

Now show that the linearization~$\cL$ is self-adjoint in some
Pontryagin space. Consider the operator~$J=P_+-P_-$, where~$P_{+}$
and~$P_-$ are the~$(\cdot,\cdot)$-orthogonal projectors onto the
spectral subspaces of $A$ corresponding to the positive and
negative parts
of the spectrum respectively. Set~$\mathscr{J}:=\left(%
\begin{array}{cc}
  J & 0 \\
  0 & I \\
\end{array}%
\right)$ and define an inner product
\[
    [\mathbf{x},\mathbf{y}]:=(\mathscr{J}x,y)_\cE=(J|A|^{1/2}x_1,|A|^{1/2}y_1)+(x_2,y_2)
\]
for every~$\mathbf{x}=(x_1,x_2)^\mathrm{t}$
and~$\mathbf{y}=(y_1,y_2)^{\mathrm{t}}$ from $\mathcal{E}$. The
number of negative eigenvalues of the operator~$\mathscr{J}$
equals that of~$J$, which in turn is the number of negative
eigenvalues of~$A$. But the operator~$A$ is lower semibounded and
the number of its negative eigenvalues is finite, say~$\kappa$.
This gives that the product~$[\cdot,\cdot]$ is indefinite and the
space~$\Pi:=(\mathcal{E},[\cdot,\cdot])$ is a Pontryagin space of
negative index~$\kappa$. Note that the topology of~$\Pi$ coincide
with that of~$\cE$.

Consider the operator~$\cL_0$ in the space~$\Pi$. For
every~$x=(x_1,x_2)^\mathrm{t}$ from $\dom \cL_0$ we have
\begin{align*}
    [\cL_0x,x]&=(J|A|^{1/2}x_2,|A|^{1/2}x_1)+(Ax_1,x_2)+(Bx_2,x_2)\\
    &=(Ax_1,x_2)+(x_2,Ax_1)+(x_2,Bx_2)=2 \mathrm{Re}(Ax_1,x_2)+(x_2,Bx_2).
\end{align*}
Clearly, this shows that~$[\cL_0x,x]$ is real and thus the
operator~$\cL_0$ is symmetric in~$\Pi$. This together with the
fact that the spectrum of~$\cL$ is discrete (see
Lemma~\ref{lem:lin.L_sp_dis}) gives the following proposition
(see~\cite{AziIoh:1989}).
\begin{proposition}
The operator~$\cL$ is self-adjoint in the Pontryagin space~$\Pi$.
\end{proposition}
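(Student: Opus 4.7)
My plan is to combine the author's explicit symmetry computation for $\cL_0$ with a standard theorem about symmetric operators in Pontryagin spaces. First, I would observe that the calculation already displayed in the excerpt shows $[\cL_0 x, x] \in \bR$ for every $x \in \dom \cL_0$. Polarizing this identity yields $[\cL_0 x, y] = [x, \cL_0 y]$ for all $x,y \in \dom \cL_0$, so $\cL_0$ is symmetric with respect to the indefinite inner product $[\cdot,\cdot]$. Because the $J$-adjoint of any densely defined operator is closed, symmetry passes to the closure, and therefore $\cL = \overline{\cL_0}$ is a closed symmetric operator in $\Pi$.

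Next I would appeal to Lemma~\ref{lem:lin.L_sp_dis}, which states that the spectrum of $\cL$ is discrete and in particular $\rho(\cL) \neq \emptyset$. At this point the proof reduces to invoking the standard Pontryagin-space theorem (found, for example, in Azizov--Iokhvidov~\cite{AziIoh:1989}): every closed symmetric operator in $\Pi_\kappa$ whose resolvent set is non-empty is automatically self-adjoint. Applied to $\cL$, this yields the conclusion.

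The only real obstacle is an administrative one: making sure the two ingredients are cleanly assembled. Specifically, one must verify that the symmetry of $\cL_0$ indeed transfers to its closure $\cL$ in the indefinite inner product setting (which follows from the closedness of the $J$-adjoint together with the definition $\cL = \overline{\cL_0}$), and then cite the Pontryagin-space result in a form that requires only the non-emptiness of $\rho(\cL)$ rather than some stronger regularity hypothesis. Neither step is genuinely difficult, so the heart of the argument is the real-valuedness computation already performed in the excerpt combined with the discreteness of $\sigma(\cL)$ established in Lemma~\ref{lem:lin.L_sp_dis}.
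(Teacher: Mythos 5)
Your route is exactly the paper's: the displayed computation gives $[\cL_0x,x]\in\bR$, hence (by polarization) symmetry of $\cL_0$ in $\Pi$, this passes to the closure $\cL$, and the conclusion is obtained by combining symmetry with the discreteness of $\sigma(\cL)$ from Lemma~\ref{lem:lin.L_sp_dis} and citing~\cite{AziIoh:1989}. One correction, though: the auxiliary theorem you invoke --- ``every closed symmetric operator in $\Pi_\kappa$ with non-empty resolvent set is self-adjoint'' --- is false as stated, already in the Hilbert-space case $\kappa=0$: a maximal symmetric, non-self-adjoint operator (e.g.\ $i\,d/dx$ on $L_2(0,\infty)$ with the boundary condition $y(0)=0$) is closed and symmetric and its resolvent set is a full open half-plane. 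So you cannot weaken ``$\sigma(\cL)$ is discrete'' to ``$\rho(\cL)\neq\emptyset$'' and then hope for a cleaner hypothesis; you need the resolvent set to meet \emph{both} half-planes (equivalently, vanishing defect numbers), which is precisely what discreteness of the spectrum supplies, since the complement of a discrete set is dense in $\bC$. With that repair --- which costs nothing, as you already have Lemma~\ref{lem:lin.L_sp_dis} in hand --- the argument is complete and coincides with the paper's.
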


Using this result and properties of self-adjoint operators in
Pontryagin spaces (see Proposition~\ref{pr:A.spSAoPs}) we obtain
that the spectrum of the operator~$\cL$ is real with possible
exception of at most~$\kappa$ pairs of complex-conjugate
eigenvalues~$\lambda$ and~$\bar{\lambda}$. The algebraic
multiplicity of any eigenvalue of~$\cL$ can not
exceed~$2\kappa+1$.

When the operator~$A$ is positive the space~$\Pi$ is a Hilbert
space (the negative index~$\kappa=0$). Then~$\cL$ is a
self-adjoint operator in a Hilbert space and thus its spectrum is
real. Moreover, the algebraic and geometric multiplicities of the
eigenvalues of~$\cL$ are equal. In view of geometric simplicity of
the eigenvalues of~$\cL$, this yields that the spectrum of~$\cL$
is algebraically simple. Therefore, if the operator~$A$ is
positive, then the spectrum of the operator~$\cL$ is real and
simple.

Summing up all these results and using the equivalence of spectral
problems for the operator pencil~$T$ and for the operator~$\cL$ we
obtain more spectral properties of~$T$ given in the following
proposition.

\begin{theorem}\label{thm:PS.SpT}
Let~$\kappa$ be the number of negative eigenvalues of the
operator~$A$. Then
\begin{itemize}
\item[(i)] the spectrum of the operator pencil~$T$ is real with possible
    exception of at most~$\kappa$ pairs of complex-conjugate
    eigenvalues~$\lambda$ and~$\bar{\lambda}$;
\item[(ii)] algebraic multiplicity of every eigenvalue of~$T$ does
    not exceed~$2\kappa+1$.
\end{itemize}
If the operator~$A$ is positive, then the spectrum of the operator
pencil~$T$ is real and simple.
\end{theorem}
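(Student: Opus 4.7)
The strategy is to transport everything that has already been established about $\mathcal{L}$ through the equivalence of spectral problems supplied by Theorem \ref{thm:lin.coin}. Since the eigenvalues of $T$ and $\mathcal{L}$ coincide together with their algebraic multiplicities, and since $\sigma(\mathcal{L})=\sigma_{\mathrm{p}}(\mathcal{L})$ by discreteness (Lemma \ref{lem:lin.L_sp_dis}) combined with Lemma \ref{lem:spop.sp_point}, it suffices to prove the three assertions for $\mathcal{L}$ and then restate them for $T$.

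First I would invoke the preceding proposition that $\mathcal{L}$ is self-adjoint in the Pontryagin space $\Pi$ of negative index $\kappa$, where $\kappa$ is the (finite) number of negative eigenvalues of $A$. Then the general spectral theory of self-adjoint operators in Pontryagin spaces — specifically the result cited in the paper as Proposition \ref{pr:A.spSAoPs} — yields at once that $\sigma(\mathcal{L})$ is symmetric with respect to the real axis, that all non-real eigenvalues come in complex-conjugate pairs $\{\lambda,\bar\lambda\}$, that the total number of such pairs (counted with algebraic multiplicities) is bounded by $\kappa$, and that no eigenvalue has algebraic multiplicity exceeding $2\kappa+1$. Transferring these bounds to $T$ via Theorem \ref{thm:lin.coin} gives statements (i) and (ii).

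For the final assertion I would specialize to the case where $A>0$, so that $\kappa=0$ and the indefinite inner product $[\cdot,\cdot]$ coincides up to the sign with $(\cdot,\cdot)_{\mathcal{E}}$ (since $J=I$ on $H$); thus $\Pi$ is actually a Hilbert space and $\mathcal{L}$ is self-adjoint in the ordinary Hilbert-space sense. This forces $\sigma(\mathcal{L})\subset\mathbb{R}$ and makes algebraic and geometric multiplicities of every eigenvalue of $\mathcal{L}$ coincide. Combined with the geometric simplicity of the eigenvalues of $T$ established at the end of Section \ref{sec:pre} (which, via the correspondence $Y_0=(y_0,\lambda_0 y_0)^{\mathrm{t}}$ constructed in the proof of Theorem \ref{thm:lin.coin}, passes to geometric simplicity of the eigenvalues of $\mathcal{L}$), we conclude that each eigenvalue of $\mathcal{L}$ is algebraically simple, and the same then holds for $T$.

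The only step that is not immediate from quoted machinery is verifying that geometric simplicity of eigenvalues of $T$ transfers to geometric simplicity of eigenvalues of $\mathcal{L}$; this is the main (small) obstacle. I would handle it by the same computation used in the proof of Theorem \ref{thm:lin.coin}: if $(Y_{0,1},Y_{0,2})^{\mathrm{t}}$ is an eigenvector of $\mathcal{L}$ at $\lambda_0$ then $Y_{0,2}=\lambda_0 Y_{0,1}$ and $Y_{0,1}\in H_1$ is an eigenvector of $T$ at $\lambda_0$, so the dimension of the $\lambda_0$-eigenspace of $\mathcal{L}$ is at most the dimension of the $\lambda_0$-eigenspace of $T$, which is one. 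With this observation the three assertions assemble into Theorem \ref{thm:PS.SpT}.
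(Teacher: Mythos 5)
Your proposal is correct and follows essentially the same route as the paper: self-adjointness of $\cL$ in the Pontryagin space $\Pi$ of index $\kappa$, the general spectral bounds for such operators, and transfer to $T$ via Theorem~\ref{thm:lin.coin}, with the Hilbert-space specialization when $A>0$. Your explicit check that geometric simplicity passes from $T$ to $\cL$ is a small step the paper leaves implicit, and your argument for it is the right one.
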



\section{Norming constants}\label{sec:nc}

\subsection{Notion of norming constants} In this section we introduce
the notion of norming constants for the operator pencil~$T$ and
establish some of their properties. In the case where the
pencil~$T$ has only real and simple eigenvalues the norming
constants were used in~\cite{HryPro:2012} to solve the inverse
spectral problem of determination of the potentials~$p$ and~$q$ of
the pencil.

We say that a matrix is upper (lower) anti-triangular if all its
elements under (above) anti-diagonal are zero. Denote
by~$M^{+}[\gamma_1,\gamma_2,\dots\gamma_p]$, respectively
by~$M^{-}[\gamma_1,\gamma_2,\dots\gamma_p]$,  a Hankel upper,
respectively lower, anti-triangular matrices given by
\[
M^{+}[\gamma_1,\gamma_2,\dots\gamma_p]=\left(%
\begin{array}{cccc}
  \gamma_1 & \gamma_{2} & \cdots & \gamma_{p} \\
  \gamma_{2} & & \iddots  & 0 \\
  \vdots &  \iddots & \iddots & \vdots \\
  \gamma_{p} & 0 & \cdots & 0 \\
\end{array}%
\right)
\]
and
\[
    M^{-}[\gamma_1,\gamma_2,\dots,\gamma_p]=\left(%
\begin{array}{cccc}
  0 & \cdots & 0 & \gamma_{1} \\
  \vdots &\iddots & \iddots  & \gamma_{2} \\
  0 &  \iddots & \iddots & \vdots \\
  \gamma_{1} & \gamma_{2} & \cdots & \gamma_{p} \\
\end{array}%
\right).
\]
If some matrix~$M=M^{\pm}[\gamma_1,\dots,\gamma_p]$ we say that
the sequence~$\gamma_1,\dots,\gamma_p$ is associated with the
matrix~$M$.

 In this section we shall often work with infinite
block-diagonal matrices with upper (lower) anti-triangular blocks
of two types. The first type blocks are just upper (lower)
anti-triangular Hankel matrices. The second type ones have the
form
\begin{equation}\label{eq:Lin.bl2}
\left(%
\begin{array}{cc}
  0 & B_1 \\
  B_2 & 0\\
\end{array}%
\right),
\end{equation}
where~$B_1$ is upper (lower) anti-triangular Hankel matrix
and~$B_2$ is its complex conjugate. Denote the blocks of such a
matrix~$M$ by~$M_n$, $n\in\bZ$. To every bock~$M_n$ of size $m$
there is associated a number sequence of length $m$; these finite
sequences together form an infinite
sequence~$(\gamma_k)_{k\in\mathbb{Z}}$ associated with~$M$.

List the eigenvalues~$\la_k$, ${k\in\bZ}$, of the operator
pencil~$T$ so that
\begin{itemize}
    \item[(i)] each eigenvalue is repeated according to its multiplicity;
    \item[(ii)] the real parts of eigenvalues do not decrease, i.e.~$\mathrm{Re} \la_i\le \mathrm{Re} \la_j$ for~$i<j$;
    \item[(iii)]the moduli of the imaginary parts of the eigenvalues
    with equal real parts non-decrease, i.e.
    if~$\mathrm{Re}\la_i=\mathrm{Re}\la_j$ for some~$i<j$
    then~$|\mathrm{Im}\la_i|\le|\mathrm{Im}\la_j|$.
   Moreover, if~$|\mathrm{Im}\la_i|=|\mathrm{Im}\la_j|$ for
   some~$i\le j$ then~$\mathrm{Im}\la_i\ge\mathrm{Im}\la_j$.
\end{itemize}
This enumeration is such that if some~$\la$ is an eigenvalue
of~$T$ of multiplicity~$m$, then there is~$n\in\bZ$ such
that~$\la=\la_n=\la_{n+1}=\dots=\la_{n+m-1}$. If, moreover,~$\la$
is non-real,
then~$\overline{\la}=\la_{n+m}=\la_{n+m+1}=\dots=\la_{n+2m-1}$.
Along with the eigenvalue sequence~$(\la_k)_{k\in\bZ}$ we
introduce the sequence~$(y_k)_{k\in\bZ}$ of vectors from~$\dom A$
in the following way: if~$\la=\la_n=\la_{n+1}=\dots=\la_{n+m-1}$
is an eigenvalue of~$T$ of multiplicity~$m$,
then~$y_n,y_{n+1},\dots y_{n+m-1}$ is a chain of eigen- and
associated vectors of~$T$ corresponding to~$\la$ and such
that~$y_n(x,\la)$ satisfies the initial
conditions~$y_n(0,\la)=0$,~$y_n^{[1]}(0,\la)=\la$ and the
functions~$y_k,y_{k+1},...,y_{k+m-1}$ are defined via
\begin{equation}\label{eq:nc.evec}
y_{n+j}(x,\la):=\left.\frac{1}{j!}\frac{\partial^j
y_n(x,z)}{\partial z^j}\right|_{z=\la},\quad j=0,1,...,m-1.
\end{equation}
Note that for complex conjugate
eigenvalues~$\la=\la_n=\la_{n+1}=\dots=\la_{n+m-1}=\overline{\la_{n+m}}=\dots=\overline{\la_{n+2m-1}}$,
the vectors~$y_{j+m}=\overline{y_j}$ for~$j=n,\dots,n+m-1$.

Next define the \emph{norming constants}~$\al_n$, $n\in\bZ$, for
the operator pencil~$T$ as follows. For an
eigenvalue~$\la=\la_n=\la_{n+1}=\dots=\la_{n+m-1}$ we put
\begin{equation}\label{eq:nc.nc_def}
\begin{aligned}
\al_n&=\left(\frac{T'(\la)}{\la}y_n,y_n\right),\\
\al_{n+j}&=\left(\frac{T'(\la)}{\la}y_n,y_{n+j}\right)+\frac{1}{\la}(y_n,y_{n+j-1}),\quad
\text{for } j=1...m-1.
\end{aligned}
\end{equation}
 Defining the norming constants for the
operator pencil~$T$ in the described way is quite reasonable.
Firstly, note that for real and simple eigenvalues so defined
norming constants determine the type of eigenvalues
(see~\cite{Mar:88}) as
\[
    \bigl(T'(\la_n)y_n,y_n\bigr) =  \la_n\al_n,
\]
where~$T'$ is the~$\la$-derivative of~$T$. Secondly, if the
potential~$p$ is identically zero,~\eqref{eq:intr.spr} is the
spectral equation for Sturm--Liouville operator~$A$ and the given
definition of the norming constants for the operator pencil~$T$
coincides with the standard definition of norming
constants,~\cite{GelLev:51}. Further we shall see that so defined
norming constants are closely related to the norming constants for
the linearization~$\cL$.

Recall (see Theorem~\ref{thm:lin.coin}) that the
sequence~$(\la_k)_{k\in\bZ}$ is also an eigenvalues sequence for
the operator~$\cL$. Consider the sequence of
vectors~$(Y_k)_{k\in\bZ}$, such that for the
eigenvalue~$\la=\la_n=\la_{n+1}=\dots=\la_{n+m-1}$ of
multiplicity~$m$ the vectors~$Y_k$, $k=n,\dots, n+m-1$ are defined
by formulae $Y_n=(y_n,\la y_n)^{\mathrm t}$ and~$Y_j=(y_j,\la
y_j+y_{j-1})^{\mathrm{t}}$, $j=n+1,...n+m-1$. From the proof of
Theorem~\ref{thm:lin.coin} we know that~$Y_n,Y_{n+1},\dots
Y_{n+m-1}$ is the chain of eigen- and associated vectors of~$\cL$
corresponding to the eigenvalue~$\la$ and so~$(Y_n)_{n\in\bZ}$ is
the sequence of all eigen- and associated vectors of~$\cL$.

Let us put~$g_{kl}:=[Y_k,Y_l]$ and associate with the
operator~$\cL$ the Gramm matrix~$G=(g_{kl})$. In view of the
equalities
\[
    [Y_{k-1},Y_{l}]=[(\cL-\la
    I)Y_k,Y_l]=[Y_k,(\cL-\overline{\la}I)Y_l],
\]
for every real~$\la=\la_n=\dots=\la_{n+m-1}$ we have
$g_{ij}=g_{kl}$ for~$n\le i,j,k,l\le n+m-1$ such that~$i+j=k+l$.
Moreover,~$g_{kl}=0$ for~$k+l<2n+m-1$. Similarly, for
complex~$\la=\la_n=\dots=\la_{n+m-1}=\overline{\la_{n+m}}=\dots=\overline{\la_{n+2m-1}}$
we obtain~$g_{ij}=g_{kl}$ for~$n\le i,j,k,l\le n+2m-1$ such
that~$2n+m\le i+j=k+l\le 2n+3m-2$ and~$g_{kl}=0$ for~$2n+m\le
k+l<2n+2m-1$. Therefore the matrix $G$ is of block-diagonal form
with blocks of two types: blocks corresponding to real eigenvalues
and those corresponding to pairs of complex conjugate ones. The
block corresponding to a real
eigenvalue~$\la_n=\la_{n+1}=\dots=\la_{n+m-1}$ of multiplicity~$m$
is a Hankel lower anti-triangular
matrix~$M^{-}[\beta_n,\dots\beta_{n+m-1}]$. The sub-matrix of~$G$
corresponding to the pair of complex conjugate
eigenvalues~$\la_n=\la_{n+1}=\dots=\la_{n+m-1}=\overline{\la_{n+m}}=\overline{\la_{n+m+1}}=\dots=\overline{\la_{n+2m-1}}$
is of the form~\eqref{eq:Lin.bl2}
with~$B_1=M^{-}[\beta_n,...\beta_{n+m-1}]$ and its complex
conjugate~$B_2$ (see Proposition~\ref{pr:A.gram}). We call the
number~$\beta_k$, $k\in\bZ$, the \emph{norming constant} of~$\cL$
corresponding to the eigenvalue~$\la_k$.

Straightforward verification shows that the norming constants
of~$\cL$ are related with those of~$T$ as follows
\begin{equation}\label{eq:nc.ba}
\beta_k=\la_k^2\alpha_k
\end{equation}
for every~$k\in\bZ$.

\begin{theorem}\label{thm:nc.pos_al_A}
All the norming constants of~$T$ are positive if and only if the
operator~$A$ is positive.
\end{theorem}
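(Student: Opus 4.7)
The plan is to transfer the question to the linearization $\cL$ in the Pontryagin space $\Pi$, using the identity $\beta_k=\la_k^{2}\alpha_k$ and the equivalence $A>0\Leftrightarrow\kappa=0$ (equivalently, $\Pi$ being a genuine Hilbert space).

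For the \emph{sufficient} direction, assume $A$ is positive. Then $\kappa=0$, so $[\cdot,\cdot]$ coincides with $(\cdot,\cdot)_{\cE}$ on $\Pi$, and Theorem~\ref{thm:PS.SpT} gives the spectrum of $T$ real and simple. By Theorem~\ref{thm:lin.coin} the spectrum of $\cL$ has the same property, so each eigenvector $Y_k$ satisfies $\beta_k=[Y_k,Y_k]=\|Y_k\|_{\cE}^{2}>0$. Since $\la_k\in\bR\setminus\{0\}$ (we use $0\in\rho(T)$), the identity $\alpha_k=\beta_k/\la_k^{2}$ yields $\alpha_k>0$.

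For the \emph{necessary} direction, assume $\alpha_k>0$ for every $k$ and aim to show the spectrum of $T$ is real and simple; then $G=\operatorname{diag}(\beta_k)$ with each $\beta_k=\la_k^{2}\alpha_k>0$ is positive definite, forcing $\kappa=0$ and hence $A>0$. \emph{Simplicity} follows from the Fredholm alternative applied to $T(\la)$, which is self-adjoint for real $\la$: if a real $\la$ had algebraic multiplicity $m\ge 2$, the equation $T(\la)y_{n+1}=-T'(\la)y_n$ for an associated vector would be solvable only if $(T'(\la)y_n,y_n)=\la\alpha_n=0$, giving $\alpha_n=0$ (since $\la\ne 0$) and contradicting positivity. \emph{Reality} is more delicate: for non-real $\la$, $y_n$ is genuinely complex-valued, and substituting the pencil equation $Ay_n=\la^{2}y_n-\la By_n$ into the definition of $\alpha_n$ produces
\[
    \alpha_n=\langle y_n,y_n\rangle+\frac{\langle Ay_n,y_n\rangle}{\la^{2}},
\]
where $\langle u,v\rangle:=\int_{0}^{1}uv\,dx$ is the bilinear pairing in which the identity $\beta_k=\la_k^{2}\alpha_k$ extends consistently to complex eigenvalues. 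From this formula one shows that $\alpha_n$ cannot lie in $(0,\infty)$: generically it is non-real, and in the degenerate symmetric case $\la=ib$ with $y_n=i\phi$ (so that $A\phi=-b^{2}\phi$ and $B\phi=0$) the formula collapses to $\alpha_n=-2\|\phi\|^{2}<0$.

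The step I expect to be the main obstacle is precisely this reality argument, since one must rule out the a priori possibility of coincidental phase alignments that could render $\alpha_n$ a positive real for some specific complex $\la$; pinning it down across all cases will require a careful combination of the bilinear identity above with the Hermitian symmetry $\alpha_{n+m+j}=\overline{\alpha_{n+j}}$ forced by the conjugate-pair enumeration.
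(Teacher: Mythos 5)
Your sufficiency direction is essentially the paper's: $A>0$ makes $\Pi$ a Hilbert space, the $\beta_k$ are squared norms of eigenvectors, and \eqref{eq:nc.ba} transfers positivity to the $\alpha_k$. That half is fine.

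The necessity direction is where you diverge, and it contains a genuine gap. The paper argues softly: if $A$ is not positive, $\Pi$ is a Pontryagin space with $\kappa\ge 1$, so by the Pontryagin invariant-subspace theorem $\cL$ has a $\kappa$-dimensional non-positive invariant subspace, hence an eigenvector whose norming constant is non-positive --- no case analysis on reality or simplicity is needed. You instead try to prove directly that positivity of all $\alpha_k$ forces the spectrum to be real and simple. The simplicity step via the Fredholm alternative is sound, but only for \emph{real} eigenvalues ($T(\la)$ is self-adjoint only for real $\la$; for non-real $\la$ the solvability condition involves the kernel of $T(\bar\la)$, not $y_n$), so it cannot be run before reality is established. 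And the reality step is precisely where the argument stops being a proof: from
$\alpha_n=\langle y_n,y_n\rangle+\la^{-2}\langle Ay_n,y_n\rangle$
with the bilinear pairing, the quantities $\langle y_n,y_n\rangle=\int y_n^2$ and $\langle Ay_n,y_n\rangle$ are arbitrary complex numbers for a genuinely complex eigenfunction, and ``generically non-real'' plus one hand-picked symmetric example ($\la=ib$, $B\phi=0$) does not exclude a non-real eigenvalue whose norming constant happens to land on the positive real axis. You acknowledge this yourself. As written, the necessity direction is incomplete; the paper's invariant-subspace argument is the missing idea that bypasses the whole difficulty.
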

\begin{proof}
Observe firstly that by~\eqref{eq:nc.ba} for nonzero eigenvalues
the norming constants of~$T$ are positive if and only if those
of~$\cL$ are also positive.

\emph{Sufficiency.} Obviously, if~$A$ is positive then the
space~$\Pi$ is a Hilbert space and so~$\cL$ is a self-adjoint
operator in a Hilbert space and all its norming constants are
positive as norms of eigenvectors in Hilbert space.

\emph{Necessity.} Suppose that~$A$ is not positive. Then~$\Pi$ is
a Pontryagin space of finite negativity index, say~$\kappa$, and
so by the Pontryagin theorem it has a maximal non-positive
subspace of dimension~$\kappa$ invariant with respect to~$\cL$.
Therefore~$\cL$ possesses an eigenvector in this subspace, and the
norming constant generated by this eigenvector is non-positive.
Thus not all norming constants of~$T$ are positive, and the proof
is complete.
\end{proof}

\subsection{Relations for norming constants}

Let us now consider the vector~$X=(0,x)^{\mathrm{t}}$ and compute
the residues of~$(\cL-z)^{-1}X$ at an eigenvalue~$\la$ in two
different ways. Equating the results, we shall obtain some
relations for norming constants~$\al_k$.

Denote by~$D=(d_{kl})$ the matrix inverse to~$G$,~i.e. $D=G^{-1}$.
Observe that~$D$ as well as~$G$ has a block-diagonal structure but
with upper anti-triangular matrices in blocks. Associate with~$D$
the sequence~$(\delta_k)_{k\in\bZ}$.

 Since the system of eigen- and associated vectors~$Y_j$ of~$\cL$ forms a Riesz basis in the Pontryagin space~$\Pi$
(see Proposition~\ref{pr:A.spSAoPs}) we can write the resolution
of identity in~$\Pi$ as follows
\[
\mathscr{I}=\sum\limits_{k=-\infty}^{+\infty}\sum\limits_{l=-\infty}^{+\infty}d_{kl}[\cdot,Y_l]Y_k,
\]
where the sum is convergent in the strong operator topology.
Using this it is straightforward to compute the residue
of~$(\cL-z)^{-1}X$ at the real
eigenvalue~$\la=\la_n=\la_{n+1}=\dots=\la_{n+m-1}$ as
\begin{equation}\label{eq:nc.res1r}
    \myres\limits_{z=\la}(\cL-z)^{-1}X=-\sum\limits_{k=n}^{n+m-1}\sum\limits_{l=n}^{n+m-1}d_{kl}[X,Y_l]Y_k=-\sum\limits_{k=n}^{n+m-1}\sum\limits_{l=n}^{n+m-1}d_{kl}(x,Y_{l,2})Y_k
\end{equation}
and for
complex~$\la=\la_n=\dots=\la_{n+m-1}=\overline{\la_{n+m}}=\dots=\overline{\la_{n+2m-1}}$
\begin{equation}\label{eq:nc.res1c}
    \myres\limits_{z=\la}(\cL-z)^{-1}X
    =-\sum\limits_{k=n}^{n+m-1}\sum\limits_{l=n+m}^{n+2m-1}d_{kl}[X,Y_l]Y_k
    =-\sum\limits_{k=n}^{n+m-1}\sum\limits_{l=n+m}^{n+2m-1}d_{kl}(x,Y_{l,2})Y_k.
\end{equation}

Next using the representation~\eqref{eq:lin.repRes} we obtain
\[
    (\cL-zI)^{-1}X=\left(%
\begin{array}{c}
  -T(z)^{-1}x \\
  -z T(z)^{-1}x \\
\end{array}%
\right).
\]

By Green's Formula
\[
    T(z)^{-1}f(x)=\frac{1}{W(z)}\left[\varphi(x,z)\int_x^1\!f(t)\psi(t,z)dt+\psi(x,z)\int_0^x\!f(t)\varphi(t,z)dt\right],
\]
where~$\varphi(\cdot,z)$ is a solution of the
equation~$\ell(y)=(z^2-2z p)y$ satisfying the initial
conditions~$y(0)=0$, $y^{[1]}(0)=z$,~$\psi(\cdot,z)$ is a solution
of the same equation satisfying the conditions~$y(1)=0$,
$y^{[1]}(1)=z$ and~$W(z)$ is the Wronskian of the
solutions~$\varphi(\cdot,z)$
and~$\psi(\cdot,z)$,~i.e.~$W(z)=\varphi(x,z)\psi^{[1]}(x,z)-\psi(x,z)\varphi^{[1]}(x,z)$.
Set~$s(z):=\varphi(1,z)$ and~$c(z):=\varphi^{[1]}(1,z)$. Since the
Wronskian~$W$ does not depend on~$x$ we observe that~$W(z)=z
\varphi(1,z)=z s(z)$. Next, note that for an eigenvalue~$\la$
of~\eqref{eq:intr.spr}, \eqref{eq:pre.b.c.} the
functions~$\psi(x,\la)$ and~$\varphi(x,\la)$ are related as
follows
\[
    \psi(x,\la)=\frac{\psi^{[1]}(1,\la)}{\varphi^{[1]}(1,\la)}\varphi(x,\la)=\frac{\la}{c(\la)}\varphi(x,\la).
\]
Taking these remarks into account, we compute the residues
of~$T(z)^{-1}$ and~$z T(z)^{-1}$ at the real
eigenvalue~$\la=\la_n=...=\la_{n+m-1}$ of multiplicity~$m$
\begin{align*}
    \myres\limits_{z=\la}T(z)^{-1}x&=\sum\limits_{k=n}^{n+m-1}\sum\limits_{l=n}^{n+m-1}h_{kl}(x,y_l)y_k,\\
     \myres\limits_{z=\la}z
     T(z)^{-1}x&=\sum\limits_{k=n}^{n+m-1}\sum\limits_{l=n}^{n+m-1}h_{kl}(x,y_l)(\la y_k+y_{k-1}),
\end{align*}
and at the complex
eigenvalue~$\la=\la_n=\la_{n+1}=...=\la_{n+m-1}=\overline{\la_{n+m}}=\overline{\la_{n+m+1}}=...=\overline{\la_{n+2m-1}}$
\begin{align*}
    \myres\limits_{z=\la}T(z)^{-1}x&=\sum\limits_{k=n}^{n+m-1}\sum\limits_{l=n+m}^{n+2m-1}h_{kl}(x,y_l)y_k,\\
     \myres\limits_{z=\la}z
     T(z)^{-1}x&=\sum\limits_{k=n}^{n+m-1}\sum\limits_{l=n+m}^{n+2m-1}h_{kl}(x,y_l)(\la
     y_k+y_{k-1})
\end{align*}
where~$h_{kl}$ form a matrix~$H=(h_{kl})$ of block diagonal form
with upper anti-triangular blocks of two types. The elements of
the sequence~$(\eta_k)_{k\in\bZ}$ associated with~$H$ are given
explicitly in the following way. For real
eigenvalue~$\la=\la_n=...=\la_{n+m-1}$ of multiplicity~$m$
\begin{equation}\label{eq:nc.h}
\eta_{j}=\frac{1}{(m+n-1-j)!}\frac{\partial^{m+n-1-j}}{\partial
z^{m+n-1-j}}
    \left.\left[\frac{(z-\la)^{m-1}}{s(z)c(z)}\right]\right|_{z=\la}
\end{equation}
for~$j=n,\dots, n+m-1$. And for
complex~$\la=\la_n=\la_{n+1}=...=\la_{n+m-1}=\overline{\la_{n+m}}=\overline{\la_{n+m+1}}=...=\overline{\la_{n+2m-1}}$
the element~$\eta_j$, $j=n,...n+m-1$, is defined
by~\eqref{eq:nc.h} and~$\eta_j=\overline{\eta_{j-m}}$
for~$j=n+m,n+m+1,...,n+2m-1$. Therefore,
\begin{equation}\label{eq:nc.res2r}
    \myres\limits_{z=\la}(\cL-z)^{-1}X=-\sum\limits_{k=n}^{n+m-1}\sum\limits_{l=n}^{n+m-1}h_{kl}(x,y_l)Y_k.
\end{equation}
for real eigenvalue~$\la=\la_n=...=\la_{n+m-1}$ and
\begin{equation}\label{eq:nc.res2c}
    \myres\limits_{z=\la}(\cL-z)^{-1}X=-\sum\limits_{k=n}^{n+2m-1}\sum\limits_{l=n}^{n+2m-1}h_{kl}(x,y_l)Y_k.
\end{equation}
for
complex~$\la=\la_n=\la_{n+1}=...=\la_{n+m-1}=\overline{\la_{n+m}}=\overline{\la_{n+m+1}}=...=\overline{\la_{n+2m-1}}$.
Now equating~\eqref{eq:nc.res1r} to~\eqref{eq:nc.res2r}
and~\eqref{eq:nc.res1c} to~\eqref{eq:nc.res2c} and taking linear
independence of~$Y_j$ (see Proposition~\ref{pr:A.spSAoPs}) into
account we derive the relation
\begin{equation}\label{eq:nc.rel}
\delta_{n+m-1}=\eta_{n+m-1}/\overline{\la},\quad
\delta_k=(\delta_k-\eta_{k+1})/\overline{\la}, \text{ for }
k=n,\dots, n+m-2.
\end{equation}
for eigenvalue~$\la=\la_n=\la_{n+1}=...=\la_{n+m-1}$ of
multiplicity~$m$.

\subsection{Determining norming constants from two spectra}

Let us now consider the problem~\eqref{eq:intr.spr} with so-called
mixed boundary conditions
\begin{equation}\label{eq:nc.mbc}
    y(0)=y^{[1]}(1)=0
\end{equation}
and denote by~$A_M$ the operator acting via
\[
    A_M y:=\ell(y)
\]
on the domain
\[
    \dom A_M:=\{y \in \dom \ell \mid  y(0)=y^{[1]}(1)=0\}.
\]
Define the operator pencil~$T_M$ by~\eqref{eq:pre.T} with~$A_M$
instead of~$A$. Then the spectral
problem~\eqref{eq:intr.spr},~\eqref{eq:nc.mbc} can be regarded as
that for~$T_M$.

We can analyse the pencil~$T_M$ in the same way as~$T$. Moreover,
by means of the operator~$A_M$ we can construct an energy
space~$\cE_M$, the corresponding Pontryagin space~$\Pi_M$ and
consider the corresponding linearization~$\cL_M$ therein as it was
done for~$T$. This will give that all the results of
sections~\ref{sec:SPOP} and~\ref{sec:Lin} concerning the
pencil~$T$ hold for~$T_M$.

Note that the function~$s(\la)$ is a characteristic function for
the problem~\eqref{eq:intr.spr}, \eqref{eq:pre.b.c.} and~$c(z)$ is
that for the problem~\eqref{eq:intr.spr},~\eqref{eq:nc.mbc}. This
means that some~$\la\in\bC$ is an eigenvalue of the pencil~$T$
($T_M$ respectively) of algebraic multiplicity~$m$ if and only if
it is a zero of~$s(z)$ (of~$c(z)$ respectively) of order~$m$. The
functions~~$s(z)$ and~$c(z)$ are of exponential type one and are
determined uniquely by their zeros by means of a canonical product
(see~\cite{You:2001}). Therefore, the spectra of~$T$ and~$T_M$
determine the functions~$s(z)$ and~$c(z)$ which by~\eqref{eq:nc.h}
and~\eqref{eq:nc.rel} determine the elements~$\delta_n$ of the
matrix~$D$ inverse to the Gramm matrix~$G$. Having~$D$ we
compute~$\beta_n$ and then the norming constants~$\al_n$ of~$T$
by~\eqref{eq:nc.ba}.

For instance, let the spectra of the pencils~$T$ and~$T_M$ consist
of simple eigenvalues~$(\la_n)_{n\in\bZ^*}$,~$\bZ^*:=\bZ\setminus
\{0\}$, and~$(\mu_n)_{n\in\bZ}$. Then for the eigenvalue~$\la_n$
of~$T$ we have (see~\eqref{eq:nc.h})
\[
\eta_n=\frac{1}{\dot{s}(\la_n)c(\la_n)}
\]
and, by definition,
\[
\delta_n=\frac{1}{\beta_n},
\]
where~$\beta_n$ is a norming constant of~$\cL$ corresponding
to~$\la_n$. These equalities together with~\eqref{eq:nc.rel} yield
\[
    \frac{\la_n}{\beta_n}=\frac{1}{\dot{s}(\la_n)c(\la_n)}
\]
which, by~\eqref{eq:nc.ba}, gives the equality for the norming
constant~$\al_n$ of~$T$ corresponding the eigenvalue~$\la_n$
\begin{equation}\label{eq:nc.al_rel}
    \al_n=\frac{\dot{s}(\la_n)c(\la_n)}{\la_n}.
\end{equation}

Therefore, the norming constant~$\al_n$ corresponding to the
eigenvalue~$\la_n$,~$n\in\bZ$ of~$T$ is determined by the formula
\begin{equation}\label{eq:cse.al}
    \al_n=\frac{1}{\la_n}{\prod\limits_{\substack{{k=-\infty}\\{k\ne
    0,n}}}^{\infty}}\frac{\la_n-\la_k}{\pi k}\prod\limits_{k=-\infty}^{\infty}\frac{\la_n-\mu_k}{\pi(k-
    1/2)}.
\end{equation}

 For the operator pencil~$T_M$ we can also define the norming
constants~$\al^M_{n}$ by~\eqref{eq:nc.nc_def} with~$T_M$ instead
of~$T$ and obtain for these norming constants analogous results as
for those of~$T$. In particular, the following theorem holds.
\begin{theorem}\label{thm:nc.pos_al_AM}
All the norming constants of~$T_M$ are positive if and only if the
operator~$A_M$ is positive.
\end{theorem}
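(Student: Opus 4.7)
The plan is to follow the proof of Theorem~\ref{thm:nc.pos_al_A} verbatim, with the roles of $A$, $T$, $\cL$, $\Pi$ played by $A_M$, $T_M$, $\cL_M$, $\Pi_M$. The author has just noted that all the constructions and results of Sections~\ref{sec:SPOP} and~\ref{sec:Lin} carry over to the mixed boundary case, so in particular $\cL_M$ is a well-defined self-adjoint operator in a Pontryagin space $\Pi_M$ whose negative index equals the number of negative eigenvalues of $A_M$. I will also use the analogue of the relation $\beta_k^M=\la_k^2\al_k^M$ between the norming constants of $T_M$ and those of $\cL_M$, obtained by repeating the computation leading to~\eqref{eq:nc.ba}. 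In particular, for every nonzero eigenvalue the signs of $\al_k^M$ and $\beta_k^M$ coincide, so it suffices to characterize positivity of the norming constants $\beta_k^M$ of $\cL_M$.

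For sufficiency, suppose $A_M$ is positive. Then the associated operator $J_M=P_+-P_-$ reduces to the identity, hence $\mathscr{J}_M=I$ and the inner product $[\cdot,\cdot]_M$ coincides with $(\cdot,\cdot)_{\cE_M}$. Consequently $\Pi_M$ is an honest Hilbert space and $\cL_M$ is self-adjoint therein. In a Hilbert space the diagonal Gramm entries $[Y_k,Y_k]_M$ are squared norms of the eigenvectors, so all $\beta_k^M>0$, and therefore every $\al_k^M>0$.

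For necessity, suppose $A_M$ is not positive, so that the negative index $\kappa_M$ of $\Pi_M$ is strictly positive. By the Pontryagin invariant-subspace theorem, $\cL_M$ admits a $\kappa_M$-dimensional maximal non-positive invariant subspace $\mathcal{N}\subset\Pi_M$. Since $\cL_M$ has discrete spectrum, its restriction to $\mathcal{N}$ is a linear map on a finite-dimensional space and therefore possesses an eigenvector $Y\in\mathcal{N}\setminus\{0\}$ with $[Y,Y]_M\le 0$. This eigenvector belongs to the sequence $(Y_k^M)_{k\in\bZ}$ (up to a scalar multiple) and contributes a non-positive diagonal Gramm entry, hence a non-positive $\beta_k^M$ and so a non-positive $\al_k^M$.

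The main subtlety, rather than an obstacle, is the verification that the setup of Section~\ref{sec:Lin} genuinely extends to the mixed boundary conditions: one needs $A_M$ to be self-adjoint, lower semibounded, and to have compact resolvent so that the energy scale $H_\al^M$ is defined and the Pontryagin-space structure on $\cE_M=H_{1/2}^M\times H$ makes sense. These are standard facts for the mixed-boundary Sturm--Liouville operator with distributional potential (cf.\ \cite{SavShk:1999,SavShk:2003}), and the author explicitly invokes them before the statement. Once these are in hand, the argument above is exactly the one used in Theorem~\ref{thm:nc.pos_al_A}.
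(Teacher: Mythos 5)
Your proposal is correct and is essentially the paper's own argument: the paper gives no separate proof of this theorem, asserting only that the constructions and the proof of Theorem~\ref{thm:nc.pos_al_A} carry over verbatim to $T_M$, $A_M$, $\cL_M$, $\Pi_M$, which is exactly what you carry out (reduction via the analogue of~\eqref{eq:nc.ba}, Hilbert-space case for sufficiency, and the Pontryagin maximal non-positive invariant subspace for necessity). Your added remark checking that $A_M$ is self-adjoint, lower semibounded and has compact resolvent is a sensible explicit verification of what the paper leaves implicit.
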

For the pencil~$T_M$ and the corresponding linearization~$\cL_M$
we can define matrices~$G_M$,~$D_M$,~$H_M$ in the same way
as~$G$,~$D$,~$H$ were defined for~$T$ and obtain analogous
relations. It can be shown by direct analysis that for real
eigenvalue~$\mu=\mu_n=\dots=\mu_{n+m-1}$ of~$T_M$ of algebraic
multiplicity~$m$ the elements of the matrix~$H_M$ are
\[
    \eta^M_{j}=-\frac{1}{(m+n-1-j)!}\frac{\partial^{m+n-1-j}}{\partial
z^{m+n-1-j}}
    \left.\left[\frac{(z-\mu)^{m-1}}{s(z)c(z)}\right]\right|_{z=\mu}
\]
for~$j=n,\dots, n+m-1$. For
complex~$\mu=\mu_n=\mu_{n+1}=...=\mu_{n+m-1}=\overline{\mu_{n+m}}=\overline{\mu_{n+m+1}}=...=\overline{\mu_{n+2m-1}}$
the elements~$\eta^M_j$, $j=n,...n+m-1$, are defined by the same
formula and~$\eta^M_j=\overline{\eta^M_{j-m}}$
for~$j=n+m,n+m+1,...,n+2m-1$. Therefore, in the case of simple
eigenvalues we obtain the formulas for norming constants~$\al^M_n$
corresponding to the eigenvalues~$\mu_n$ of~$T_M$
\[
    \al^M_n=-\frac{s(\mu_n)\dot{c}(\mu_n)}{\mu_n}
\]
and so
\begin{equation}\label{eq:cse.alm}
    \al^M_n=-\frac{1}{\mu_n}{\prod\limits_{\substack{{k=-\infty}\\{k\ne
    0}}}^{\infty}}\frac{\mu_n-\la_k}{\pi k}\prod\limits_{\substack{{k=-\infty}\\{k\ne
    n}}}^{\infty}\frac{\mu_n-\mu_k}{\pi(k-
    1/2)}.
\end{equation}


\section{The case of real and simple eigenvalues}\label{sec:case}


This section is devoted to the special case when the spectra
of~$T$ and~$T_M$ are real and simple. We shall establish some
conditions which guarantee that these spectra are real and simple.

We say that the spectra of the operator pencils~$T$ and~$T_M$
\emph{almost interlace} if they consist only of real and simple
eigenvalues, which can be labeled in increasing order as~$\la_n$,
${n\in\mathbb{Z}^*}$, and~$\mu_n$, ${n\in \mathbb{Z}}$,
respectively so that they satisfy the condition
\begin{equation}\label{eq:nc.aic}
    \mu_k<\la_k<\mu_{k+1}\; \text{for every } k\in\mathbb{Z}^*.
\end{equation}

\begin{theorem}
The following statements are equivalent
\begin{itemize}
    \item[(i)] The spectra of~$T$ and~$T_M$ almost interlace.
    \item[(ii)] A real number~$\mu_*$ exists such that the operator~$T_M(\mu_*)$ is negative.
\end{itemize}
\end{theorem}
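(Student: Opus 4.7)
The plan is to prove both directions by translating the definiteness condition $T_M(\mu_*)<0$ into positivity of a shifted self-adjoint operator and then exploiting the Pontryagin-space and variational machinery developed in Sections~\ref{sec:SPOP}--\ref{sec:nc}. After the shift $\lambda\mapsto\lambda+\mu_*$ the pencil $T_M$ becomes $\hat T_M(\lambda)=\lambda^2 I-\lambda(B-2\mu_* I)-\hat A_M$ with $\hat A_M:=A_M+\mu_* B-\mu_*^2 I$, so condition (ii) reads precisely $\hat A_M>0$; the same shift applied to $T$ produces $\hat T$ with $\hat A:=A+\mu_* B-\mu_*^2 I$.

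For (ii)$\Rightarrow$(i): the quadratic form of $A_M$ agrees with that of $A$ on the form domain of $A$, which is a codimension-one subspace of the form domain of $A_M$ obtained by imposing the extra Dirichlet condition $y(1)=0$. Hence $\hat A_M>0$ forces $\hat A>0$, and Theorem~\ref{thm:PS.SpT} applied to both shifted pencils yields that $\sigma(T)$ and $\sigma(T_M)$ are real and algebraically simple. Moreover, with $\hat A,\hat A_M>0$ the shifted pencils are overdamped: for every unit vector $y$ in the form domain of $\hat A_M$ the scalar quadratic $f_y(\lambda)=\lambda^2-2\lambda(py,y)-(\hat A_M y,y)$ has two real roots $\lambda_\pm(y)=(py,y)\pm\sqrt{(py,y)^2+(\hat A_M y,y)}$, and the ``positive-type'' (resp.\ ``negative-type'') eigenvalues of $T_M$ admit the min--max (resp.\ max--min) characterisation over $n$-dimensional subspaces of the form domain of $\hat A_M$; analogously for $T$ over the form domain of $\hat A$. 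Because the form domain of $\hat A$ sits codimension-one inside that of $\hat A_M$, a Cauchy-interlacing argument gives $\mu_n^\pm\le\lambda_n^\pm\le\mu_{n+1}^\pm$, which in the paper's enumeration is exactly \eqref{eq:nc.aic}.

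For (i)$\Rightarrow$(ii): consider the self-adjoint family $T_M(\mu)=\mu^2 I-\mu B-A_M$ for real $\mu$; its eigenvalues are $\kappa_j(\mu)=\mu^2-\nu_j^M(\mu)$, where $\nu_1^M(\mu)<\nu_2^M(\mu)<\cdots$ are the eigenvalues of the Sturm--Liouville operator $A_M+\mu B$. Condition (ii) is equivalent to finding $\mu_*$ with $\kappa_1(\mu_*)<0$, since $\kappa_1(\mu)=\sup\sigma(T_M(\mu))$ dominates the rest of the spectrum. Now $\nu_1^M(\mu)=\inf_{\|y\|=1}((A_M y,y)+\mu(By,y))$ is concave in $\mu$ as an infimum of affine functions, so $\kappa_1$ is strictly convex with $\kappa_1''\ge 2$; since $\kappa_1(\mu)\to+\infty$ as $\mu\to\pm\infty$, it has either no real zeros or exactly two. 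A zero of $\kappa_1$ is a pencil eigenvalue whose eigenfunction is the nodeless ground state of $A_M+\mu B$. Using Sturm oscillation theory, the Dirichlet/mixed comparison $\nu_k^M(\mu)\le\nu_k^A(\mu)\le\nu_{k+1}^M(\mu)$ (a consequence of the codimension-one form-domain inclusion) and the prescribed pattern \eqref{eq:nc.aic}, I would show that exactly the two eigenvalues $\mu_0$ and $\mu_1$ correspond to nodeless eigenfunctions, so that they are the two zeros of $\kappa_1$. Strict convexity then forces $\kappa_1(\mu_*)<0$ for every $\mu_*\in(\mu_0,\mu_1)$, and hence $T_M(\mu_*)<0$.

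The main obstacle is the oscillation-theoretic identification in (i)$\Rightarrow$(ii): tying the prescribed almost-interlacing enumeration, in which $\mu_0$ and $\mu_1$ straddle the unique $\lambda$-free gap, to the analytic statement that the top eigenvalue curve $\kappa_1$ has its two zeros precisely at $\mu_0$ and $\mu_1$. One must count interior zeros of pencil eigenfunctions $y_n$ as solutions of an energy-dependent Sturm--Liouville equation, and match this count with the combinatorics of \eqref{eq:nc.aic}. Once this identification is secured, strict convexity of $\kappa_1$ and the min--max/Cauchy-interlacing comparison used in the other direction close the argument.
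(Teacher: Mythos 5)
Your plan takes a genuinely different route from the paper, which argues entirely through norming constants: for (i)$\Rightarrow$(ii) the paper reads off the signs of the $\al^M_n$ from the two-spectra product formula~\eqref{eq:cse.alm} and invokes Theorem~\ref{thm:nc.pos_al_AM} to conclude $A_M>0$; for (ii)$\Rightarrow$(i) it combines Theorem~\ref{thm:PS.SpT}, the positivity of all $\widehat\al^M_n$, the sign of the ratio $\widehat\al^M_{n+1}/\widehat\al^M_n$, and the eigenvalue asymptotics to count the $\la_k$ in each interval $(\mu_n,\mu_{n+1})$. Your replacement --- overdamped-pencil variational principles plus oscillation theory --- is a recognized alternative, but as written it has two genuine gaps.

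First, in (ii)$\Rightarrow$(i) you invoke a min--max/max--min characterisation of the positive- and negative-type eigenvalues of the shifted pencils and a Cauchy-interlacing step for the nonlinear Rayleigh functionals $\lambda_\pm(y)$ under the codimension-one form-domain inclusion. None of this ``variational machinery'' is actually developed in Sections~\ref{sec:SPOP}--\ref{sec:nc}; those sections build the linearization, the Pontryagin-space self-adjointness and the norming constants, not variational principles. The min--max principle for hyperbolic pencils with an unbounded coefficient $B$ (multiplication by $2p$, $p\in L_2$ only) and $q\in W_2^{-1}$ is a nontrivial external theorem that you would have to import and verify in this singular setting, and the interlacing of the two index sequences in the paper's enumeration (including strictness of $\mu_k<\la_k<\mu_{k+1}$, which needs the observation that $s$ and $c$ have no common zeros) is asserted rather than derived. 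Second, and more seriously, in (i)$\Rightarrow$(ii) the entire argument reduces to showing that the top curve $\kappa_1(\mu)=\mu^2-\nu_1^M(\mu)$ has its two zeros exactly at $\mu_0$ and $\mu_1$; you yourself label this identification ``the main obstacle'' and do not carry it out. Without it you cannot exclude the possibility that $\kappa_1>0$ everywhere while all the real eigenvalues $\mu_n$ sit on lower curves $\kappa_j$, $j\ge2$, in which case no $\mu_*$ with $T_M(\mu_*)<0$ exists. Tying the nodal count of the eigenfunctions of the $\mu$-dependent equation (with distributional $q$) to the combinatorics of~\eqref{eq:nc.aic} is precisely the content of the theorem in this direction, so leaving it as a plan leaves the proof incomplete. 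The paper's detour through~\eqref{eq:cse.alm} and Theorem~\ref{thm:nc.pos_al_AM} is exactly what circumvents this oscillation-theoretic step.
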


\begin{proof}
((i)~$\Rightarrow$ (ii)) Let us firstly prove this implication for
the case when~$0\in(\mu_0,\mu_1)$. We shall show that then the
operator~$A_M=-T_M(0)$ is positive, which means that~$T_M(0)$ is
negative.

In view of~\eqref{eq:cse.alm} and the fact that the product
\[
    (\mu_n-\la_n)\prod\limits_{\substack{{k=-\infty}\\{k\ne
    0,n}}}^{\infty}\frac{(\mu_n-\la_k)}{\pi
    k}\frac{(\mu_n-\mu_k)}{\pi(k-1/2)}
\]
is negative, the sign of the norming constant~$\al^M_n$ is defined
by the sign of~$(\mu_n-\mu_0)/\mu_n$. Thus under our assumption
all the norming constants~$\al^M_n$ of~$T_M$ are positive. By
Theorem~\ref{thm:nc.pos_al_AM}, this gives positivity of~$A_M$.

 If~$0$ does not belong to~$(\mu_0,\mu_1)$ we take any point~$\mu_*$ from this
interval and shift the spectral parameter of~$T$ and~$T_M$
by~$\mu_*$ to obtain the pencils
\begin{align} \label{eq:crs.sT}
    \widehat{T}(\la)&=T(\la+\mu_*)=\la^2I-\la(B-2\mu_*I)+T(\mu_*)=\la^2I-2\la
    \widehat{B}-\widehat{A}\\
    \widehat{T}_M(\la)&=T_M(\la+\mu_*)=\la^2I-\la(B-2\mu_*I)+T_M(\mu_*)=\la^2I-2\la
    \widehat{B}-\widehat{A}_M\label{eq:crs.sTm}
\end{align}
with~$\widehat{B}:=B-2\mu_*I$,~$\widehat{A}:=-T(\mu_*)$
and~$\widehat{A}_M:=-T_M(\mu_*)$. Clearly, the spectra
of~$\widehat{T}$ and~$\widehat{T}_M$ almost interlace
with~$0\in(\mu_0,\mu_1)$. In view of the first part of this proof
the operator~$\widehat{A}_M$ is positive. Therefore, the
operator~$T_M(\mu_*)$ with this~$\mu_*$ is negative.

((ii)~$\Rightarrow$ (i)) Let the operator~$T_M(\mu_*)$ be
negative. Consider the operator pencil~$\widehat{T}_M$
of~\eqref{eq:crs.sTm} obtained from~$T_M$ by the shift of the
spectral parameter by~$\mu_*$. Then the
operator~$\widehat{A}_M=-T_M(\mu_*)$, and the operator $\widehat
A=-T(\mu_*)$ is also negative. By Theorem~\ref{thm:PS.SpT}, the
spectra of~$T$ and~$T_M$ are real and simple. The
eigenvalues~$\lambda_n$ and~$\mu_n$ can be enumerated so that
$\lambda_n =\pi n + p_0 + \tilde \la_n$
and~$\mu_n=\pi\left(n-\tfrac12\right) + p_0 + \tilde \mu_n$ with
$p_0:=\int_0^1 p(x)\,dx$ and $\ell_2$-sequences $(\tilde \mu_n),
(\tilde \lambda_n)$.

Next define the norming
constants~$\widehat{\al}^M_j$,~$j\in\bZ^*$, for~$\widehat{T}_M$.
In view of Theorem~\ref{thm:nc.pos_al_AM}, all these norming
constants  are positive and by~\eqref{eq:cse.alm} they are
determined by the formula
\[
\widehat{\al}^M_n=-\frac{1}{\mu_n-\mu_*}{\prod\limits_{\substack{{k=-\infty}\\{k\ne
    0}}}^{\infty}}\frac{\mu_n-\la_k}{\pi k}\prod\limits_{\substack{{k=-\infty}\\{k\ne
    n}}}^{\infty}\frac{\mu_n-\mu_k}{\pi(k-
    1/2)},
\]
where~$\la_n$,~$n\in\bZ^*$, and~$\mu_n$,~$n\in\bZ$, are the
eigenvalues of~$T$ and~$T_M$ respectively. Therefore, the
expression
\[
\frac{\widehat{\al}^M_{n+1}}{\widehat{\al}^M_n}=-\frac{\mu_{n}-\mu_*}{\mu_{n+1}-\mu_*}{\prod\limits_{\substack{{k=-\infty}\\{k\ne
    0}}}^{\infty}}\frac{\mu_{n+1}-\la_k}{\mu_n- \la_k}\prod\limits_{\substack{{k=-\infty}\\{k\ne
    n,n+1}}}^{\infty}\frac{\mu_{n+1}-\mu_k}{\mu_n-\mu_k}
\]
is positive. This gives that if~$\mu_n<\mu_*<\mu_{n+1}$, then
there is an even number of~$\la_k$ between~$\mu_n$ and~$\mu_{n+1}$
and otherwise there is an odd number of~$\la_k$ between~$\mu_n$
and~$\mu_{n+1}$. But the asymptotics of~$\mu_n$ and~$\la_n$
(see\cite{Pro:2012}) implies that the number of elements
of~$(\la_k)$ between~$\mu_n$ and~$\mu_{n+1}$ can not exceed~$1$
and that there is no~$\lambda_k$ between~$\mu_0$ and~$\mu_1$. This
gives that~$(\la_n)$ and~$(\mu_n)$ almost interlace
and~$\mu_*\in(\mu_0,\mu_1)$. This completes the proof.
\end{proof}

From the proof of the first implication in the above theorem we
immediately obtain the following corollaries.

\begin{corollary}\label{cor:nc.1}
If the spectra~$(\la_n)_{n\in\bZ^*}$ of~$T$
and~$(\mu_n)_{n\in\bZ}$ of~$T_M$ almost interlace, then for every
number~$\mu_*$ from the interval~$(\mu_0,\mu_1)$ the
operator~$T_M(\mu_*)$ is negative.
\end{corollary}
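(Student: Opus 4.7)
The plan is to extract the argument already carried out in the implication (i) $\Rightarrow$ (ii) of the preceding theorem and observe that it applies to any $\mu_*\in(\mu_0,\mu_1)$, not just a fixed one. Fix an arbitrary $\mu_*\in(\mu_0,\mu_1)$ and perform the spectral-parameter shift of~\eqref{eq:crs.sT}--\eqref{eq:crs.sTm}, obtaining the shifted pencils $\widehat{T}$ and $\widehat{T}_M$ whose eigenvalues are $\widehat\la_n=\la_n-\mu_*$ and $\widehat\mu_n=\mu_n-\mu_*$. By construction the shifted spectra still almost interlace, and now $0$ lies in the interval $(\widehat\mu_0,\widehat\mu_1)$.

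Next I would invoke the norming-constant formula~\eqref{eq:cse.alm} applied to $\widehat{T}_M$: for every $n$,
\[
\widehat{\al}^M_n=-\frac{1}{\widehat\mu_n}
\prod_{\substack{k=-\infty\\ k\ne 0}}^{\infty}\frac{\widehat\mu_n-\widehat\la_k}{\pi k}
\prod_{\substack{k=-\infty\\ k\ne n}}^{\infty}\frac{\widehat\mu_n-\widehat\mu_k}{\pi(k-1/2)}.
\]
The almost-interlacing condition~\eqref{eq:nc.aic} pins down the signs of all but one of the factors in the two products, and the product
\[
(\widehat\mu_n-\widehat\la_n)\prod_{\substack{k=-\infty\\ k\ne 0,n}}^{\infty}\frac{(\widehat\mu_n-\widehat\la_k)}{\pi k}\frac{(\widehat\mu_n-\widehat\mu_k)}{\pi(k-1/2)}
\]
is negative (this is the sign-bookkeeping already used in the main theorem). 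Hence the sign of $\widehat\al^M_n$ is determined by $-(\widehat\mu_n-\widehat\mu_0)/\widehat\mu_n$, and since $0\in(\widehat\mu_0,\widehat\mu_1)$ this quotient is positive for every~$n$.

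Thus all norming constants $\widehat\al^M_n$ of $\widehat{T}_M$ are positive, and Theorem~\ref{thm:nc.pos_al_AM} (applied to the pencil $\widehat{T}_M$, whose unperturbed operator is $\widehat{A}_M=-T_M(\mu_*)$) gives that $\widehat{A}_M$ is positive, that is, $T_M(\mu_*)$ is negative. Since $\mu_*$ was an arbitrary point of $(\mu_0,\mu_1)$, the claim follows. The main obstacle — checking that the signs in the canonical product actually work out — is already settled inside the proof of the theorem, so no new technical input is needed; one only has to verify that the sign reasoning was independent of the particular choice of $\mu_*$ within $(\mu_0,\mu_1)$, which is immediate from the formula above.
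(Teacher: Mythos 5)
Your proposal is correct and is exactly the paper's intended argument: the paper proves the corollary by the one-line observation that the implication (i)~$\Rightarrow$~(ii) was carried out for an \emph{arbitrary} $\mu_*\in(\mu_0,\mu_1)$ (shift by $\mu_*$, check positivity of all $\widehat{\al}^M_n$ via~\eqref{eq:cse.alm}, apply Theorem~\ref{thm:nc.pos_al_AM}). Only a cosmetic remark: keep the sign bookkeeping consistent --- the quantity whose positivity you need is $(\widehat\mu_n-\widehat\mu_0)/\widehat\mu_n$, as in the paper, not its negative.
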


\begin{corollary}\label{cor:nc.2}
If for some~$\mu_*\in\bR$ the operator~$T_M(\mu_*)$ is negative,
then the spectra~$(\la_n)_{n\in\bZ^*}$ of~$T$
and~$(\mu_n)_{n\in\bZ}$ of~$T_M$ almost interlace
with~$\mu_*\in(\mu_0,\mu_1)$. Moreover, for every~$\mu$
from~$(\mu_0,\mu_1)$ the operator~$T_M(\mu)$ is negative .
\end{corollary}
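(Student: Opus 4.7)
The plan is to combine the two directions of the preceding theorem together with Corollary~\ref{cor:nc.1} to recover both assertions of the corollary. The hypothesis that $T_M(\mu_*)$ is negative is precisely condition~(ii) of the theorem, so the implication (ii)$\Rightarrow$(i) immediately yields that the spectra $(\la_n)_{n\in\bZ^*}$ of $T$ and $(\mu_n)_{n\in\bZ}$ of $T_M$ consist of real simple eigenvalues that almost interlace.

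Next I would localise $\mu_*$ inside $(\mu_0,\mu_1)$ by reusing the argument inside the (ii)$\Rightarrow$(i) proof itself rather than the bare statement of the theorem. Consider the shifted pencils $\widehat T$ and $\widehat T_M$ from~\eqref{eq:crs.sT}--\eqref{eq:crs.sTm}; since $\widehat A_M = -T_M(\mu_*)$ is positive by hypothesis, Theorem~\ref{thm:nc.pos_al_AM} guarantees that all norming constants $\widehat\al^M_n$ of $\widehat T_M$ are strictly positive. A sign analysis of the product~\eqref{eq:cse.alm} for the ratios $\widehat\al^M_{n+1}/\widehat\al^M_n$, combined with the standard asymptotics $\mu_n = \pi(n-\tfrac12) + p_0 + \tilde\mu_n$ with $\tilde\mu_n\in\ell_2$, forces the (unique) shifted gap containing no $\la_k-\mu_*$ to be $(\mu_0-\mu_*,\mu_1-\mu_*)$ and to contain the shifted spectral parameter~$0$, hence $\mu_* \in (\mu_0,\mu_1)$.

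With almost interlacing in hand and $\mu_*$ located, the ``moreover'' clause follows from a direct application of Corollary~\ref{cor:nc.1}: for every $\mu\in(\mu_0,\mu_1)$ the operator $T_M(\mu)$ is negative. The only non-mechanical step is the second paragraph, where one must read off from the proof of (ii)$\Rightarrow$(i) that $\mu_*$ falls specifically in $(\mu_0,\mu_1)$; the remainder of the argument is an immediate invocation of prior results and presents no significant obstacle.
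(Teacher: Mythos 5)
Your proposal is correct and follows essentially the same route as the paper: the author also obtains the almost interlacing and the localisation $\mu_*\in(\mu_0,\mu_1)$ directly from the argument inside the (ii)$\Rightarrow$(i) implication (positivity of the ratios $\widehat{\al}^M_{n+1}/\widehat{\al}^M_n$ plus the eigenvalue asymptotics), and then the ``moreover'' clause is exactly an application of Corollary~\ref{cor:nc.1}. No gaps.
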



\appendix

\section{Basics of Pontryagin spaces theory}

In this appendix we recall some facts from the Pontryagin space
theory, which we use in the paper. The details of the theory, more
spectral properties of self-adjoint operators in Pontryagin spaces
and the proofs of the propositions given here can be found
in~\cite{Bog:1974,Lan:1982,AziIoh:1989}

A linear space~$\Pi$ is called an \emph{inner product space} if
there is a complex-valued function~$[\cdot,\cdot]$ defined
on~$\Pi\times \Pi$ so that the conditions
\begin{align*}
[\al_1 u_1+\al_2 u_2,v]&=\al_1[u_1,v]+\al_2[u_2,v]\\
[u,v]&=\overline{[v,u]}
\end{align*}
hold for every~$\al_1, \al_2\in \bC$ and~$u_1, u_2, u, v \in \Pi$.
The function~$[\cdot,\cdot]$ is then called an \emph{inner
product}. An inner product space~$(\Pi,[\cdot,\cdot])$ is a
\emph{Pontryagin space} of \emph{negative index}~$\kappa$ if~$\Pi$
can be written as
\begin{equation}\label{eq:PST.dec}
\Pi=\Pi_+\, [\dot{+}]\, \Pi_-,
\end{equation}
where~$[\dot{+}]$ denotes the direct~$[\cdot,\cdot]$-orthogonal
sum,~$(\Pi_{\pm},\pm[\cdot,\cdot])$ are Hilbert spaces and the
component~$\Pi_-$ is of finite dimension~$\kappa$.

An element~$x\in \Pi$ is said to be \emph{positive}
(\emph{negative, non-positive, non-negative, neutral} resp.)
if~$[x,x]>0$ ($[x,x]<0$, $[x,x]\le0$, $[x,x]\ge0$, $[x,x]=0$
resp.). A subspace~$\mathcal{M}$ of~$P$ is called~\emph{positive}
(\emph{negative, non-positive, non-negative, neutral} resp.) if
all its non-zero vectors are positive (negative, non-positive,
non-negative, neutral resp.)

In Pontryagin space of negative index~$\kappa$ the dimension of
any non-positive subspace can not exceed~$\kappa$. Moreover, a
non-positive subspace of Pontryagin space is maximal (i.e. such
that it is not properly included in any other non-positive
subspace) if and only if it is of dimension~$\kappa$.

Pontryagin spaces often arise from Hilbert spaces in the following
way. Suppose we have a Hilbert space~$(\mathcal{H},(\cdot,\cdot))$
and a bounded self-adjoint operator~$G$ in~$\mathcal{H}$
with~$0\in \rho(G)$ which has exactly~$\kappa$ negative
eigenvalues counted according to their multiplicities. Then with
an inner product
\[
    [x,y]:=(Gx,y), \; x,y\in \mathcal{H}
\]
the space~$(\mathcal{H},[\cdot,\cdot])$ is a Pontryagin space of
negative index~$\kappa$ for which the
decomposition~\eqref{eq:PST.dec} can be given with~$\Pi_+$
and~$\Pi_-$ being the spectral subspaces of~$G$ corresponding to
the positive and negative spectrum of~$G$ respectively.

Consider a Pontryagin space~$\Pi:=\bigl( \Pi,[\cdot,\cdot]\bigr)$
and a closed operator~$\mathcal{A}$ densely defined on~$\Pi$. An
\emph{adjoint}~$\mathcal{A}^{[\ast]}$ of~$\mathcal{A}$ in~$\Pi$ is
defined on the domain
\[
    \dom \mathcal{A}^{[\ast]}:=\{y\in \Pi\mid [\mathcal{A}\cdot,y] \text{ is a continuous linear functional on} \dom \mathcal{A}\}
\]
by the relation
\[
    [\mathcal{A}x,y]=[x, \mathcal{A}^{[\ast]}y],\; x\in \dom \mathcal{A},\; y\in\dom
    \mathcal{A}^{[\ast]}.
\]
The operator~$\mathcal{A}$ is \emph{symmetric}
if~$\mathcal{A}\subset \mathcal{A}^{[\ast]}$ and
\emph{self-adjoint} if~$\mathcal{A} = \mathcal{A}^{[\ast]}$. In
contrast to the case of Hilbert space, the spectrum of
self-adjoint operator in Pontryagin space is not necessarily real,
but it is always symmetric with respect to the real axis.

If for some eigenvalue~$\la_0$ of a self-adjoint operator in a
Pontryagin space all eigenvectors are positive (negative resp.)
then~$\la_0$ is called of \emph{positive} (\emph{negative} resp.)
\emph{type}.

\begin{proposition}\label{pr:A.spSAoPs}
Assume~$\cA$ is a self-adjoint operator in a Pontryagin
space~$\Pi$. Then
\begin{itemize}
   \item[(i)] The spectrum of~$\cA$ is real
with possible exception of at most~$\kappa$ pairs of
eigenvalues~$\la$ and~$\bar{\la}$ of finite multiplicities.
\item[(ii)]
If the spectrum of the operator~$\cA$ is discrete, then the set of
all eigenvectors and the corresponding associated vectors
of~$\mathcal{A}$ forms a basis in~$\Pi$.
\end{itemize}
\end{proposition}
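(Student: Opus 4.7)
The plan is to prove (i) by combining the elementary fact that generalized eigenspaces at non-real eigenvalues are neutral with a signature-counting argument, and to prove (ii) by invoking Pontryagin's theorem on the existence of a maximal non-positive invariant subspace together with the Hilbert-space spectral theorem on the complementary part.

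For (i), I would first fix a non-real eigenvalue $\la_0$ of $\cA$ and examine the generalized eigenspace $N_{\la_0}:=\bigcup_{k\ge 1}\ker(\cA-\la_0)^k$. The starting point is the identity $\la_0[x,x]=[\cA x,x]=[x,\cA x]=\overline{\la_0}[x,x]$ for an eigenvector $x$, which already forces $[x,x]=0$ since $\la_0\ne\overline{\la_0}$. An induction on the length of Jordan chains, based on the relation $[(\cA-\la_0)x,y]=[x,(\cA-\overline{\la_0})y]$, then shows that $[x,y]=0$ for every $x\in N_{\la_0}$ and $y\in N_{\mu_0}$ unless $\mu_0=\overline{\la_0}$. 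In particular, $N_{\la_0}$ itself is a neutral subspace of $\Pi$.

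The next step is the dimension bookkeeping. Because distinct generalized eigenspaces sum directly and the sum $N_{\la_0}+N_{\overline{\la_0}}$ is $[\cdot,\cdot]$-non-degenerate (the pairing between the two neutral pieces is non-degenerate by self-adjointness), the restriction of $[\cdot,\cdot]$ to this pair has signature $(d,d)$ with $d:=\dim N_{\la_0}=\dim N_{\overline{\la_0}}$. Summing over all non-real eigenvalues in the upper half-plane, the direct sum of the corresponding $N_{\la_0}+N_{\overline{\la_0}}$ is $\cA$-invariant and non-degenerate, and its total negative signature cannot exceed the negative index $\kappa$ of~$\Pi$. Hence both the number of such pairs and each individual $d$ are finite, and the total is bounded by $\kappa$, as claimed.

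For (ii), under the discreteness assumption I would appeal to Pontryagin's fundamental theorem providing a $\kappa$-dimensional maximal non-positive $\cA$-invariant subspace $L$ into which one packages all the ``exceptional'' spectral data (the non-real eigenvalues together with the real eigenvalues of non-positive type). The $[\cdot,\cdot]$-orthogonal companion $L^{[\perp]}$ is again $\cA$-invariant and, after factoring out the isotropic part $L\cap L^{[\perp]}$, becomes a Hilbert space on which $\cA$ induces a self-adjoint operator with discrete spectrum; the classical spectral theorem supplies an orthonormal basis of eigenvectors there, which combined with a Jordan basis of the finite-dimensional piece $L$ yields the desired basis of eigen- and associated vectors of $\cA$.

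The main obstacle is the last step of (ii): extracting a \emph{Riesz} (topological) basis rather than just an algebraic one. This requires exhibiting the finite-dimensional ``bad'' part as a topological direct summand, which is naturally effected by Riesz spectral projections $P=\frac{1}{2\pi i}\oint_\Gamma(z-\cA)^{-1}\,dz$ around small contours $\Gamma\subset\rho(\cA)$ enclosing the finitely many exceptional eigenvalues. Verifying that these projections are well defined and commute with $\cA$, and that the resulting topological direct sum exhausts $\Pi$, is the delicate technical point; the cleanest way is to quote the general structural theorem for self-adjoint operators in Pontryagin spaces from the references already cited in the appendix.
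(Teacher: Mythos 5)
The first thing to note is that the paper does not prove this proposition: it appears in the appendix as a quoted standard fact, with the proofs explicitly delegated to the references \cite{Bog:1974,Lan:1982,AziIoh:1989}. So your sketch is being measured against a citation, and the relevant question is whether your outline actually closes into a proof. It follows the classical route (neutrality of root subspaces at non-real eigenvalues, signature counting, Pontryagin's invariant-subspace theorem), but two steps are genuinely incomplete.

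In (i), your argument bounds the number of non-real \emph{eigenvalues}, but the statement concerns the \emph{spectrum}: you must also show that every non-real spectral point of a self-adjoint operator in a Pontryagin space is an eigenvalue (of finite algebraic multiplicity). This is not automatic -- in a Krein space with infinite negative index it fails -- so finiteness of $\kappa$ has to enter here, typically via the very decomposition you postpone to (ii): split off a $\kappa$-dimensional maximal non-positive invariant subspace, note that the operator induced on the Hilbert-space complement has real spectrum, and conclude that all non-real spectrum sits in a finite-dimensional invariant piece. Relatedly, the non-degeneracy of $N_{\la_0}+N_{\overline{\la_0}}$, which you attribute simply to ``self-adjointness,'' needs an actual argument: the clean one is that the Riesz projection onto the spectral subspace for the conjugate-symmetric pair $\{\la_0,\overline{\la_0}\}$ is $[\cdot,\cdot]$-self-adjoint, and the range of a self-adjoint idempotent in a non-degenerate space is non-degenerate.

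In (ii), the step ``factor out the isotropic part $L\cap L^{[\perp]}$ and apply the Hilbert-space spectral theorem to the quotient'' does not by itself produce eigenvectors of $\cA$ in $\Pi$: an eigenvector of the induced operator on $L^{[\perp]}/(L\cap L^{[\perp]})$ lifts only to a vector $x$ with $(\cA-\la)x\in L\cap L^{[\perp]}$, and one must still show that the finitely many degenerate real root subspaces can be assembled into genuine Jordan chains of $\cA$ and that the combined system is a Riesz basis, not merely complete. You flag this yourself and propose to quote the general structural theorem from the cited monographs -- but that is exactly what the paper already does for the entire proposition, so at that point the argument is no longer an independent proof but the same citation.
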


 Denote
by~$\mathcal{M}_\la(\mathcal{A})$ the root space of the
operator~$\cA$ corresponding to eigenvalue~$\la$.
\begin{proposition}\label{pr:A.gram} Suppose~$\cA$ is a self-adjoint operator in a Pontryagin space. Then
\begin{enumerate}
\item For eigenvalues~$\la$ and~$\mu$ of~$\cA$ such that~$\la\ne \overline{\mu}$  the root spaces~$\mathcal{M}_{\la}(\mathcal{A})$
    and~$\mathcal{M}_{\mu}(\mathcal{A})$ are orthogonal;
\item The linear span of all the algebraic root spaces corresponding to the eigenvalues
of~$\cA$ in the upper (or lower) half plane is a neutral subspace
of~$\Pi$;
\item The root spaces~$\mathcal{M}_\la(\mathcal{A})$
    and~$\mathcal{M}_{\bar{\la}}(\mathcal{A})$  corresponding to
    complex conjugate eigenvalues~$\la$ and~$\bar{\la}$ are isomorphic. The spaces~$\mathcal{M}_\la(\mathcal{A})$
and~$\mathcal{M}_{\bar{\la}}(\mathcal{A})$ are of the same
dimension and have the same Jordan structure;
\item The length of a chain of eigen- and associated vectors does not exceed
$2\kappa+1$.
\end{enumerate}
\end{proposition}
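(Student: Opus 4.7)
The plan is to prove the four items in order, each building on its predecessors, and to rely throughout on basic properties of $[\cdot,\cdot]$-orthogonality and neutral subspaces rather than deeper machinery from Pontryagin space theory.

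For item (1), I would first treat genuine eigenvectors: if $\cA x = \la x$ and $\cA y = \mu y$, self-adjointness gives $\la[x,y] = [\cA x, y] = [x, \cA y] = \bar\mu[x,y]$, so $\la \neq \bar\mu$ forces $[x,y] = 0$. The extension to associated vectors then follows by induction on the sum of the chain-depths of $x$ and $y$, using the identity $[(\cA - \la)u, v] = [u, (\cA - \bar\la)v]$ together with the invertibility of $\cA - \bar\la$ on $\mathcal{M}_\mu(\cA)$ when $\bar\la \neq \mu$, which lets one ``strip off'' associated vectors one at a time. Item (2) is then immediate: any two eigenvalues $\la_1, \la_2$ in the open upper half-plane satisfy $\la_1 \neq \bar\la_2$, so all pairings $[\mathcal{M}_{\la_1}(\cA), \mathcal{M}_{\la_2}(\cA)]$ (including the case $\la_1 = \la_2$) vanish by (1), making the full span neutral.

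Item (3) is the technical core. I would use the Riesz spectral projector to decompose $\Pi = V\,[\dot{+}]\,W$ with $V := \mathcal{M}_\la(\cA) + \mathcal{M}_{\bar\la}(\cA)$ and $W$ the sum of all other root spaces; by (1) we have $V \perp W$, so non-degeneracy of $[\cdot,\cdot]$ on $\Pi$ forces non-degeneracy of $[\cdot,\cdot]$ on $V$. Item (1) also implies that $\mathcal{M}_\la(\cA)$ and $\mathcal{M}_{\bar\la}(\cA)$ are each $[\cdot,\cdot]$-isotropic inside $V$, so the only non-trivial component of the form on $V$ is the bilinear pairing $\mathcal{M}_\la(\cA) \times \mathcal{M}_{\bar\la}(\cA) \to \bC$, $(x,y) \mapsto [x,y]$, which must therefore be perfect. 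In finite dimensions this yields $\dim \mathcal{M}_\la(\cA) = \dim \mathcal{M}_{\bar\la}(\cA)$. To match the Jordan structures, I would observe that under this perfect pairing $\cA|_{\mathcal{M}_\la(\cA)}$ and $\cA|_{\mathcal{M}_{\bar\la}(\cA)}$ are mutually transposed, and an operator and its transpose have the same list of Jordan block sizes.

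For item (4), I would split into cases. If $\la$ is non-real, then by (2) the whole root space $\mathcal{M}_\la(\cA)$ is neutral, so a chain of length $m$ inside it spans a neutral subspace of dimension $m$, forcing $m \leq \kappa \leq 2\kappa + 1$. If $\la$ is real and $Y_0, \ldots, Y_{m-1}$ is a chain with $(\cA - \la) Y_k = Y_{k-1}$ and $Y_{-1} = 0$, the shift identity $[Y_i, Y_j] = [(\cA-\la)Y_{i+1}, Y_j] = [Y_{i+1}, Y_{j-1}]$ propagates the vanishing $[Y_0, Y_j] = 0$ for $j \leq m-2$ to give $[Y_i, Y_j] = 0$ whenever $i + j \leq m - 2$; consequently $\mathrm{span}(Y_0, \ldots, Y_{\lfloor(m-2)/2\rfloor})$ is a neutral subspace of dimension $\lfloor m/2 \rfloor$, so $\lfloor m/2 \rfloor \leq \kappa$ and $m \leq 2\kappa + 1$. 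The main obstacle is item (3): one must keep careful track of which subspaces are $[\cdot,\cdot]$-orthogonal and verify rigorously that the induced perfect pairing forces the Jordan block sizes on $\mathcal{M}_\la(\cA)$ and $\mathcal{M}_{\bar\la}(\cA)$ to coincide; the other three items reduce to short algebraic manipulations once (1) is available.
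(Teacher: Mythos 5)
The paper does not prove this proposition at all: it is stated in the appendix as a collection of known facts, with the proofs delegated to the cited monographs of Bogn\'ar, Langer, and Azizov--Iokhvidov. So there is nothing in the paper to compare against line by line; what you have written is essentially the standard textbook argument, and in outline it is correct. Item (1) via $\la[x,y]=\bar\mu[x,y]$ plus induction on chain depth, item (2) as an immediate corollary, and item (4) by exhibiting a neutral subspace of dimension $\lfloor m/2\rfloor$ inside a real Jordan chain of length $m$ (using $[Y_0,Y_k]=[(\cA-\la)Y_0,Y_{k+1}]=0$ for $k\le m-2$ and the shift identity) are all sound; the bound $\lfloor m/2\rfloor\le\kappa$ indeed gives $m\le 2\kappa+1$, and the non-real case follows from neutrality of the whole root space.

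The one place where your wording needs repair is in item (3), in the definition of $W$. You cannot take $W$ to be ``the sum of all other root spaces'': for a general self-adjoint operator in a Pontryagin space the spectrum need not be discrete, the root subspaces need not span $\Pi$, and then $V\perp W$ together with non-degeneracy on $\Pi$ does not force non-degeneracy on $V$. The correct route, which you already gesture at, is to use the Riesz projector $P$ for the isolated spectral set $\{\la,\bar\la\}$ (non-real eigenvalues of a self-adjoint operator in $\Pi_\kappa$ are normal eigenvalues, so $P$ exists and has finite rank); since this spectral set is symmetric about the real axis, $P$ is $[\cdot,\cdot]$-self-adjoint, hence $\Pi=\operatorname{ran}P\,[\dot+]\,\ker P$ and the form is non-degenerate on $V=\operatorname{ran}P=\mathcal{M}_\la(\cA)\dotplus\mathcal{M}_{\bar\la}(\cA)$. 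From there your argument goes through: each summand is isotropic by (1), so the sesquilinear pairing $\mathcal{M}_\la\times\mathcal{M}_{\bar\la}\to\bC$ is perfect, equal dimensions follow, and since $(\cA-\bar\la)|_{\mathcal{M}_{\bar\la}}$ is the adjoint of $(\cA-\la)|_{\mathcal{M}_\la}$ under this pairing, the identity $\dim\ker M^k=\dim(\operatorname{ran}N^k)^{[\perp]}=\dim\ker N^k$ matches the Jordan block sizes. With that correction the proof is complete and consistent with the classical treatment the paper cites.
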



\bigskip

\noindent\emph{Acknowledgement.} The author thanks her supervisor
Dr.~Rostyslav Hryniv for useful discussions and valuable
suggestions and for help with the preparation of the manuscript.


\bibliographystyle{abbrv}

\bibliography{My_bibl}
\end{document}